\def\mcA{{\mycal A}}
\def\mcB{{\mycal B}}
\def\mcF{{\mycal F}}
\def\mcP{{\mycal P}}
\def\mcR{{\mycal R}}
\def\eps{{\varepsilon}}
\newtheorem{theorem} {\sc  Theorem\rm} [section]
\newtheorem{corollary} [theorem] {\sc  Corollary\rm}
\newtheorem{lemma} [theorem] {\sc  Lemma\rm}
\newtheorem{proposition} [theorem] {\sc  Proposition\rm}
\newtheorem{prop} [theorem] {\sc  Proposition\rm}
\newtheorem{remark}[theorem]{\sc  Remark\rm}
\newcounter{marnote}
\DeclareFontFamily{OT1}{rsfs}{}
\DeclareFontShape{OT1}{rsfs}{m}{n}{ <-7> rsfs5 <7-10> rsfs7 <10-> rsfs10}{}
\DeclareMathAlphabet{\mycal}{OT1}{rsfs}{m}{n}
\def\dist{{\rm dist}}
\def\tr{{\rm tr}}
\def\mcS{{\mycal{S}}}
\def\be{\begin{equation}}
\def\ee{\end{equation}}
\def\Id{{\rm Id}}
\def\dist{{\rm dist}}
\def\tr{{\rm tr}}
\def\mcS{{\mycal{S}}}
\newcommand{\R}{\mathbb{R}}
\newcommand{\N}{\mathbb{N}}
\def\be{\begin{equation}}
\def\ee{\end{equation}}
\def\bea#1\eea{\begin{align}#1\end{align}}
\def\non{\nonumber}
\def\mcR{{\mycal{R}}}
\numberwithin{equation}{section}
\renewcommand{\d}{\mathrm{d}}
\newcommand{\SO}{\mathrm{SO}}
\newcommand{\abs}[1]{\left| #1 \right|} 
\newcommand{\norm}[1]{\left\| #1 \right\|}
\begin{document}
\title{Polydispersity and surface energy strength in  nematic colloids}

\author{Giacomo Canevari\thanks{Dipartimento di Informatica,
Universit\`a degli Studi di Verona,
Strada le Grazie 15, 37134 Verona, Italy
(giacomo.canevari@univr.it)}\, and 
Arghir Zarnescu\thanks{IKERBASQUE, Basque Foundation for Science, Maria Diaz de Haro 3,
48013, Bilbao, Bizkaia, Spain.}\,  
\thanks{BCAM,  Basque  Center  for  Applied  Mathematics, 
Mazarredo  14,  E48009  Bilbao,  Bizkaia,  Spain. }\,
\thanks{``Simion Stoilow" Institute of the Romanian Academy, 
21 Calea Grivi\c{t}ei, 010702 Bucharest, Romania.}} 

\maketitle

\begin{abstract} 
 We consider a Landau-de Gennes model for a polydisperse, inhomogeneous suspension of 
 colloidal inclusions in a nematic host, in the dilute regime. 
 We study the homogenised limit and compute the effective free energy of the composite material.
 By suitably choosing the shape of the inclusions and 
 imposing a quadratic, Rapini-Papoular type surface anchoring energy density,
 we obtain an effective free energy functional with an additional linear term, 
 which may be interpreted as an ``effective field'' induced by the inclusions. 
 Moreover, we compute the effective free energy
 in a regime of ``very strong anchoring'', that is,
 when the surface energy effects dominate over the volume free energy.
\end{abstract}




\section{Introduction}

We consider a mixture of mesoscale size particles within an ambient fluid that contains locally aligned microscopic scale rod-like molecules, that is a nematic liquid crystals. This type of mixture, which is usually referred to as a \emph{nematic colloid} material, has emerged in the recent years as the material of choice for testing a number of exciting hypothesis in the design of new materials. An overview of the field, and its applications, from the physical point of view is available in the reviews \cite{Lavrentovich,Smalyukh}.

The mathematical studies of such systems are still relatively few and focus on two extreme situations: 
\begin{itemize}
\item the effect produced by one colloidal particle, particularly related to the so-called {\it `defect patterns'} that is the strong distortions produced at the interface between  the particle and the ambient nematic fluid;
\item the collective effects produced by the presence of many particle, fairly uniformly distributed, with a focus on the homogenised material.
\end{itemize}

In the first direction one should note that defects appear because of the 
anchoring conditions at the boundary of the particles,
which generate topological obstructions \cite{ABL1, ABL2, CRamaswamyMajumdar,
CSegattiVeneroni, CSegatti, CSegatti-proc, WangCMajumdar}. 
There  there have been a number of works, identifying several 
physically relevant regimes \cite{ABL1} and the influence of external fields \cite{ABL2}. 

Our work focuses on the second direction, namely on long-scale effects produced by the effects of a large number of particles, namely on the homogenisation regime. There a couple of works in this direction, on which our work builds, namely \cite{multiscale, BCD, ferromagnetic}.  The main novelty of our approach,
compared to those in~\cite{multiscale, BCD, ferromagnetic}, is that we allow for a much larger class 
of surface energy densities: we do not assume that the surface energy density is bounded from below and
we do consider surface energy densities of quartic growth, which is the maximal growth compatible with the Sobolev embeddings. Surface energy densities of quartic growth have been 
proposed in the physical literature~\cite{Goossens, SluckinPon, genpot, Rey}.

We focus on a regime in which the total volume of the particles is much smaller than that of the ambient nematic environment, that is a  dilute regime. Our aim is to provide a mathematical understanding of statements from the physical literature e.g. \cite{Reznikov,Fenghua} showing that in such a regime the colloidal nematics behave like a {\it homogenised, standard nematic material}, but with  different  (better) properties than those of the original nematic material.

In our previous work~\cite{CanevariZarnescu}, we provided a first approach to these issues and we showed that using periodically distributed {\it identical} particles, one can design a suitable surface energy to obtain an apriori designed potential, that models the main physical properties of the material (in particular the nematic-isotropic transition temperature).

The purpose of these notes is two-fold, aiming to understand what happens when one goes beyond some of the restrictions imposed in our previous work~\cite{CanevariZarnescu}.
The first goal is to extend the main results of~\cite{CanevariZarnescu}
to \emph{polydisperse} and \emph{inhomogenoeus} nematic colloids; the second
one, is to explore a regime of parameters that differs 
from the one considered in~\cite{CanevariZarnescu}.

Realistically, a set of colloidal inclusions will hardly be identical: 
the particles will differ in their size, shape, or charge.
In order to account for polydispersity, we will consider 
several populations of colloidal inclusions, which may differ in their shape
and properties. Moreover, we will not require the centres of mass of the inclusions
to be homogeneously distributed in space. 
In mathematical terms, let~$\mcP^1$, $\mcP^2$, \ldots, $\mcP^J$ be subsets of~$\R^3$
(the reference shapes of the inclusions), and let~$\Omega\subseteq\R^3$ 
be a bounded, smooth domain (the container). We define
\begin{equation} \label{inclusions}
 \mcP_\eps^j := \bigcup_{i = 1}^{N_\eps^j}\left(x^{i,j}_\eps + \eps^\alpha
 R_\eps^{i,j}\mcP^j\right) \qquad \textrm{for } j\in\{1, \, \ldots, \, J\},
\end{equation}
where~$\alpha$ is a positive number, the $x_\eps^{i,j}$'s are points 
in~$\Omega$ and the~$R_\eps^{i,j}$ are rotation matrices that
satisfy suitable assumptions (see Section~\ref{Sec:main}).
As in~\cite{ferromagnetic, CanevariZarnescu}, we work in the \emph{dilute regime}, namely
we assume that~$\alpha > 1$ so that the total volume occupied by the inclusions,
$\abs{\mcP_\eps}\approx \eps^{3\alpha - 3}$, tends to zero as~$\eps\to 0$.
However, we also assume that $\alpha < 3/2$ so that the total surface area of the inclusions,
$\sigma(\partial\mcP_\eps)\approx\eps^{2\alpha-3}$,
diverges as~$\eps\to 0$.
We define $\mcP_\eps:= \cup_{j} \mcP_{\eps}^j$ 
and~$\Omega_\eps := \Omega\setminus\mcP_\eps$ (the space 
that is effectively occupied by the nematic liquid crystal).
In accordance with the Landau-de Gennes theory,
the nematic liquid crystal is described by a tensorial order parameter,
that is, a symmetric, trace-free $(3\times 3)$-matrix field~$Q$.
We consider the free energy functional
\begin{equation} \label{LdG-intro}
 \mcF_\eps[Q] := \int_{\Omega_\eps} \left(f_e(\nabla Q) + f_b(Q)\right)\d x +
 \eps^{3-2\alpha} \sum_{j=1}^J \int_{\partial\mcP_\eps^j} f_s^j(Q, \, \nu) \, \d\sigma.
\end{equation}
Here, $f_e$, $f_b$ are suitable elastic and bulk energy densities
(in the Landau-de Gennes theory, $f_e$ is typically a positive definite, 
quadratic form in~$\nabla Q$ and~$f_b$ is a quartic polynomial in~$Q$; 
see Section~\ref{Sec:main}), $f_s^j$ is a surface anchoring energy densities
(which may vary for different species of inclusions),
and~$\nu$ denotes the exterior unit normal to~$\Omega$.
We prove a convergence result for \emph{local} minimisers of~$\mcF_\eps$
to local minimsers of the effective free energy functional:
\begin{equation*}
 \mcF_0[Q] := \int_{\Omega} \left(f_e(\nabla Q) + f_b(Q) + f_{hom}(Q, \, x)\right)\d x.
\end{equation*}
The ``homogenised potential'' $f_{hom}$, which keeps memory of the surface integral,
is explicitly computable in terms of the~$f_s^j$'s, the distribution
of the centres of mass~$x_\eps^{i,j}$ and the rotations~$R_\eps^{i,j}$.
As an application of this result, we show that polydisperse inclusions 
may be used to mimic the effects of an applied electric field. More precisely,
for a pre-assigned parameter~$W\in\R$ and a pre-assigned symmetric matrix~$P$,
we may tune the shape~$\mcP_\eps^j$ of the inclusions and
the surface energy densities, so to have in the limit
\[
 f_{hom}(Q) = W \tr(QP).
\]
When~$P$ has the form~$P = E\otimes E$ for some~$E=\R^3$,
this expression may be interpreted as an
electrostatic energy density induced by the 
``effective field''~$E$,
up to terms that do not depend on~$Q$.

\bigskip Moving beyond the issue of polydispersity we consider another physically restrictive assumption we made in \cite{CanevariZarnescu}, namely concerning the anchoring strength.
In~\eqref{LdG-intro}, the scaling of parameters is chosen so 
to have a factor of~$\eps^{3-2\alpha}$ in front of the surface integral,
which compensates exactly the growth of the surface area, 
$\sigma(\partial\mcP_\eps)\approx\eps^{2\alpha - 3}$.
However, other choices of the scaling are possible, corresponding 
to different choices of the anchoring strength at the 
boundary of the inclusions. One can easily check that having a weaker anchoring, say of the type $\eps^{2\alpha-3+\delta}$ with $\delta>0$ will lead to a vanishing of the homogenized term, so the main interest is to understand what happens for stronger anchoring.To illustrate this possibility,
we study the asymptotic behaviour, as~$\eps\to 0$, of minimisers of 
\begin{equation*} 
 \mcF_{\eps,\gamma}[Q] := \int_{\Omega_\eps} 
 \left(f_e(\nabla Q) + f_b(Q)\right)\d x +
 \eps^{3-2\alpha-\gamma} \sum_{j=1}^J \int_{\partial\mcP_\eps^j} f_s^j(Q, \, \nu) \, \d\sigma,
\end{equation*}
where~$\gamma$ is a positive parameter. This scaling
corresponds to a much stronger surface anchoring and we expect the 
behaviour of minimisers to be dominated by the surface
energy, as~$\eps\to 0$. Indeed, we will show
that for $\gamma$ small enough the functionals~$\mcF_{\eps, \,\gamma}$ 
$\Gamma$-converge to the constrained functional
\[
 \widetilde{\mcF}(Q) := \begin{cases}
                        \displaystyle\int_{\Omega} 
                          \left(f_e(\nabla Q) + f_b(Q)\right)\d x 
                              & \textrm{if } f_{hom}(Q(x), \, x) = 0
                              \textrm{ for a.e. } x\in\Omega \\
                          +\infty & \textrm{otherwise,}
                       \end{cases}
\]
as~$\eps\to 0$. 

This result leaves a number of interesting of open problems, the most immediate ones being what is the optimal range of $\gamma$ for which this holds and, directly related to this, if one gets a different limit for large values of $\gamma$. 

The paper is organized as follows: in the following, in Section~\ref{Sec:main} we consider the polydisperse setting and the general homogenisation result. The main results of this section, namely Theorem~\ref{th:loc-min} and Proposition~\ref{prop:liminf} are presented after the introduction of the mathematical setting, in Subsection~\ref{Sec:main}. The proof of the results is provided in Subsection~\ref{subsec:proofhom} after a number of preliminary results, need in the proof, provided in Subsection~\ref{subsec:prelim}.

In Section~\ref{sec:linear} we provide an application of the results in Section~\ref{Sec:main}, namely showing in Proposition~\ref{prop:homlinear} that one can in a polydisperse setting obtain a linear term in the homogenised potential.

Finally, in Section~\ref{sect:gamma} we study the case when the scaling of the anchoring strength is $\eps^{3-2\alpha-\gamma}$ with $\gamma$ suitably small, and provide in Theorem~\ref{th:gamma} the $\Gamma$-converegence result mentioned above. Its proof is done at the end of the section after a number of preliminary results.

\section{An homogenisation result for polydisperse, inhomogeneous 
nematic colloids in the dilute regime}

\subsection{Statement of the homogenisation result}
\label{Sec:main}

\paragraph*{The Landau-de Gennes $Q$-tensor.}

In the Landau-de Gennes theory, the local configuration 
of a nematic liquid crystal is represented by a symmetric,
symmetric, trace-free, real $(3\times 3)$-matrix, known as the $Q$-tensor,
which describes the anisotropic optical properties of the 
medium~\cite{deGennes, Mottram}. We denote by~$\mcS_0$ the set of matrix as above.
For~$Q$, $P\in\mcS_0$, we denote~$Q\cdot P := \tr(QP)$. This defines a 
scalar product on~$\mcS_0$, and the corresponding norm will be denoted by
$\abs{Q} := (\tr(Q^2))^{1/2} = (\sum_{i,j} Q_{ij})^{1/2}$.

\paragraph*{The domain.}

let~$\mcP^1$, $\mcP^2$, \ldots, $\mcP^J$ be subsets of~$\R^3$
(the reference shapes of the inclusions), and let~$\Omega\subseteq\R^3$ 
be a bounded, smooth domain (the container). We define~$\mcP_\eps^j$
as in~\eqref{inclusions}, where~$\alpha$, $x_\eps^{i,j}$, 
$R_\eps^{i,j}$ satisfy the following assumptions:
\begin{enumerate}[label=(H\textsubscript{\arabic*}), ref=H\textsubscript{\arabic*}]
 \item \label{hp:alpha} \label{hp:first} $1 < \alpha < 3/2$.
 
 \item \label{hp:Omega_eps}
 There exists a constant $\lambda_\Omega>0$ such that
 \[
  \dist(x_\eps^{i,j}, \, \partial\Omega) + 
  \frac{1}{2} \inf_{(h, \, k)\neq (i, j)}
  |x_\eps^{h, k} - x_\eps^{i,j}| \geq \lambda_\Omega\eps
 \]
 for any~$\eps>0$, any~$j\in\{1, \, \ldots, \, J\}$  and
 any~$i\in\{1, \, \ldots, \, N_\eps^j\}$.
 
 \item \label{hp:conv} For any~$j\in\{1, \, \ldots, \, J\}$,
 there exists a non-negative function $\xi^j\in L^\infty(\Omega)$,
 such that
 \[
  \mu_\eps^j := \eps^3\sum_{i=1}^{N_\eps^j}\delta_{x_\eps^{i, j}}
  \rightharpoonup^* \xi^j \, \d x
  \qquad \textrm{as measures in } \R^3\textrm{, as } \eps\to 0.
 \]
 
 \item \label{hp:R} For any~$j\in\{1, \, \ldots, \, J\}$, 
 there exists a Lipschitz-continuous map~$R_*^j\colon\overline{\Omega}\to\SO(3)$
 such that $R_\eps^{i,j} = R_*^j(x_\eps^{i,j})$ for any~$\eps>0$ 
 and any~$i\in\{1, \, \ldots, \, N_\eps^j\}$.
 
 \item \label{hp:P} For any~$j\in\{1, \, \ldots, \, J\}$, 
 $\mcP^j\subseteq\R^3$ is a compact, convex set 
 whose interior contains the origin.
\end{enumerate}

The assumption~\eqref{hp:Omega_eps} is a separation condition
on the inclusions. As a consequence of~\eqref{hp:Omega_eps}, the number of
the inclusions, for each population~$j$, is $N_\eps^j\lesssim\eps^{-3}$.
Therefore, the total volume of the inclusions in each population
is bounded by $N_\eps^j\eps^3\lesssim \eps^{3\alpha-3}\to 0$, because of~\eqref{hp:alpha}.
Thus, we are in the diluted regime, as in \cite{ferromagnetic, CanevariZarnescu}. 
We define
\[
 \mcP_\eps := \bigcup_{j=1}^J\mcP_\eps^j, \qquad
 \Omega_\eps := \Omega\setminus\mcP_\eps.
\]
The assumption~\eqref{hp:P} guarantees that $\Omega_\eps$ is a Lipschitz domain.

\paragraph*{The free energy functional.}

For~$Q\in H^1(\Omega_\eps, \, \mcS_0)$, we consider the free energy functional
\begin{equation} \label{LdG}
 \mcF_\eps[Q] := \int_{\Omega_\eps} \left(f_e(\nabla Q) + f_b(Q)\right)\d x +
 \eps^{3-2\alpha} \sum_{j=1}^J \int_{\partial\mcP_\eps^j} f_s^j(Q, \, \nu) \, \d\sigma.
\end{equation}
The surface anchoring energy densities depend on~$j$, as
colloids that belong to different populations may have different surface properties.
For the rest, our assumptions for the elastic energy density~$f_e$,
bulk energy density~$f_b$, and surface energy densities~$f_s^j$
are the same as in~\cite{CanevariZarnescu}.
We say that a function $f\colon\mcS_0\otimes\R^3\to\R$ 
is strongly convex if there exists~$\theta>0$ such that 
the function $\mcS_0\otimes\R^3\ni D \mapsto f(D) - \theta|D|^2$
is convex. 

\begin{enumerate}[label=(H\textsubscript{\arabic*}), ref=H\textsubscript{\arabic*}, resume]
 \item \label{hp:fe}
 $f_e\colon\mcS_0\otimes\R^3\to[0, \, +\infty)$ 
 is differentiable and strongly convex. Moreover, 
 there exists a constant~$\lambda_e>0$ such that 
 \[
  \lambda_e^{-1}|D|^2 \leq f_e(D) \leq \lambda_e|D|^2, \qquad
  |(\nabla f_e)(D)| \leq \lambda_e\left(|D| + 1\right)
 \]
 for any~$D\in\mcS_0\otimes\R^3$.
 
 \item \label{hp:fb} $f_b\colon\mcS_0\to\R$ is continuous, non-negative
 and there exists~$\lambda_b>0$ such that
 $0 \leq f_b(Q)\leq \lambda_b(|Q|^6 + 1)$ for any~$Q\in\mcS_0$.
 
 \item \label{hp:fs} \label{hp:last} For any~$j\in\{1, \, \ldots, \, J\}$,
 the function $f^j_s\colon\mcS_0\times\mathbb{S}^2\to\R$ is 
 locally Lipschitz-continuous. Moreover, there exists a 
 constant~$\lambda_s>0$ such that
 \[
  |f^j_s(Q_1, \, \nu_1) - f_s^j(Q_2, \, \nu_2)|\leq
  \lambda_s\left(|Q_1|^3 + |Q_2|^3 + 1\right)
  \left(|Q_1 - Q_2| + |\nu_1 - \nu_2|\right)
 \]
 for any~$j\in\{1, \, \ldots, \, J\}$ and 
 any~$(Q_1, \, \nu_1)$, $(Q_2, \, \nu_2)$ in~$\mcS_0\times\mathbb{S}^2$.
\end{enumerate}

A physically relevant example of elastic energy density~$f_e$
that satisfies~\eqref{hp:fe} is given by
\begin{equation} \label{LdG-fe}
 f_e^{LdG}(\nabla Q) := L_1 \, \partial_k Q_{ij} \, \partial_k Q_{ij}
 + L_2 \, \partial_j Q_{ij} \, \partial_k Q_{ik}
 + L_3 \, \partial_j Q_{ik} \, \partial_k Q_{ij}
\end{equation}
(Einstein's summation convention is assumed), so long as the
coefficients~$L_1$, $L_2$, $L_3$ satisfy
\begin{equation} \label{Longa}
 L_1 > 0, \qquad - L_1 < L_3 < 2L_1,
 \qquad -\frac{3}{5} L_1 - \frac{1}{10} L_3 < L_2
\end{equation}
(see e.g.~\cite{deGennes, Longa}). The assumption~\eqref{hp:fb} is satisfied
by the quartic Landau-de Gennes bulk potential, given by
\begin{equation*} 
 f_b^{LdG}(Q) := a\,\tr(Q^2) - b\,\tr(Q^3) +
 c\left(\tr(Q^2)\right)^2 + \kappa(a, \, b, \, c)
\end{equation*}
where~$a\in\R$, $b>0$, $c>0$ are coefficients depending 
on the material and the temperature and~$\kappa(a, \, b,\ , c)\in\R$ 
is a constant, chosen in such a way that $\inf f^{LdG}_b = 0$.
An example of surface energy density that satisfies~\eqref{hp:fs} is
the Rapini-Papoular type energy density:
\begin{equation*} \label{Rapini}
 \begin{split}
  f_s(Q, \, \nu) &:= W \, \tr(Q - Q_\nu)^2
  \qquad \textrm{with } Q_\nu := \nu\otimes\nu - \frac{\Id}{3}
 \end{split}
\end{equation*}
and~$W$ a (typically positive) parameter. However, \eqref{hp:fs}
allows for much more general surface energy densities, 
which may not be positive and may have up to quartic growth in~$Q$
(for examples, see e.g. \cite{Goossens, SluckinPon, genpot, Rey}
and the references therein). In addition to~\eqref{hp:fs}, 
physically relevant surface energy densities must satisfy
symmetry properties (frame-indifference, invariance with respect 
to the sign of~$\nu$) but these will play no r\^ole
in our analysis.

\paragraph*{The homogenised potential.}

For any~$j\in\{1, \, \ldots, \, J\}$, let us define
$f_{hom}^j\colon\mcS_0\times\overline{\Omega}\to\R$ as
\begin{equation} \label{f_hom_j}
 f_{hom}^j(Q, \, x) := \int_{\partial\mcP^j} 
 f_s^j(Q, \, R_*^j(x)\nu_{\mcP^j}) \, \d\sigma
 \qquad \textrm{for } (Q, \, x)\in\mcS_0\times\overline{\Omega},
\end{equation}
where~$\nu_{\mcP^j}$ denotes the 
\emph{inward}-pointing unit normal to~$\partial\mcP^j$,
and~$R_*^j\colon\overline{\Omega}\to\SO(3)$ is the map
given by~\eqref{hp:R}. 
Finally, let
\begin{equation} \label{f_hom}
 f_{hom}(Q, \, x) := \sum_{j=1}^J \xi^j(x) f_{hom}^j(Q, \, x)
 \qquad \textrm{for } (Q, \, x)\in\mcS_0\times\overline{\Omega},
\end{equation}
where~$\xi^j\in L^\infty(\Omega)$ is the function given by~\eqref{hp:conv}.
Our candidate homogenised functional is defined
for any $Q\in H^1(\Omega, \mcS_0)$ as
\begin{equation} \label{Eq:homLdG}
 \mcF_0[Q] 
 := \int_{\Omega} \left(f_e(\nabla Q) + f_b(Q) + f_{hom}(Q, \, x)\right)\d x.
\end{equation}

\paragraph*{The convergence result.}

The assumptions~\eqref{hp:first}--\eqref{hp:last} are \emph{not}
enough to guarantee that global minimisers of~$\mcF_\eps$ exist 
and actually, it may happen that~$\mcF_\eps$ is unbounded from 
below \cite[Lemma~3.6]{CanevariZarnescu}. Instead, our main result 
focus on the asymptotic behaviour of \emph{local} minimisers.
Given~$g\in H^{1/2}(\partial\Omega, \, \mcS_0)$, we let
$H^1_g(\Omega_\eps, \mcS_0)$ --- respectively, $H^1_g(\Omega, \, \mcS_0)$ ---
be the set of maps~$Q\in H^1(\Omega_\eps, \mcS_0)$
--- respectively, $Q\in H^1(\Omega, \mcS_0)$ --- that satisfy~$Q = g$
on~$\partial\Omega$, in the sense of traces. For each~$Q\in H^1_g(\Omega_\eps, \mcS_0)$,
we define the map~$E_\eps Q\in H^1_g(\Omega, \, \mcS_0)$
by $E_\eps Q := Q$ on~$\Omega_\eps$ and 
$E_\eps Q := Q_\eps^{i,j}$ on~$\mcP_\eps^{i,j}$, where~$Q_\eps^{i,j}$
is the unique solution of Laplace's problem
\begin{equation} \label{harmonic}
 \begin{cases}
  -\Delta Q_\eps^{i,j} = 0 & \textrm{in } \mcP_\eps^{i,j} \\
  Q_\eps^{i,j} = Q         & \textrm{on } \partial\mcP_\eps^i.
 \end{cases}
\end{equation}

\begin{theorem}  \label{th:loc-min}
 Suppose that the assumptions \eqref{hp:first}--\eqref{hp:last}
 are satisfied. Suppose, moreover, that $Q_0\in H^1_g(\Omega, \mcS_0)$
 is an isolated $H^1$-local minimiser for 
 $\mcF_0$ --- that is, there exists $\delta_0>0$ such that
 \begin{equation*} 
  \mcF_0[Q_0] < \mcF_0[Q]
 \end{equation*}
 for any~$Q\in H^1_g(\Omega, \mcS_0)$ such that $Q\neq Q_0$ and
 $\|Q - Q_0\|_{H^1(\Omega)}\leq\delta_0$. Then, for any~$\varepsilon$
 small enough, there exists an $H^1$-local minimiser $Q_\eps$ for~$\mcF_\eps$
 such that $E_\eps Q_\eps\to Q_0$ strongly in~$H^1(\Omega)$ as~$\eps\to 0$.
\end{theorem}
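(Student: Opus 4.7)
The plan follows the standard ``$\Gamma$-convergence implies convergence of local minimizers'' scheme, adapted to the $\eps$-dependent domain via the extension $E_\eps$. Fix $\delta\in(0,\delta_0)$ and let
\[
 m_\eps := \inf\bigl\{\mcF_\eps[Q] : Q\in H^1_g(\Omega_\eps,\mcS_0),\ \|E_\eps Q - Q_0\|_{H^1(\Omega)}\leq\delta\bigr\}.
\]
I aim to show that $m_\eps$ is attained at some $Q_\eps$, that $E_\eps Q_\eps \to Q_0$ strongly in $H^1(\Omega)$, and hence that for $\eps$ small the constraint is inactive, so that $Q_\eps$ is an honest $H^1$-local minimizer of $\mcF_\eps$.

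Existence of the constrained minimizer will follow by the direct method, using coercivity from the quadratic lower bound in~\eqref{hp:fe}, non-negativity of $f_b$, the locally Lipschitz growth~\eqref{hp:fs} of the surface integrand, and standard harmonic-extension estimates bounding $E_\eps Q$ on $\mcP_\eps$ in terms of the trace of $Q$ on $\partial\mcP_\eps$. For a matching upper bound I take the recovery sequence $\tilde Q_\eps := Q_0|_{\Omega_\eps}$; its extension $E_\eps\tilde Q_\eps$ differs from $Q_0$ only by a harmonic correction inside each inclusion, which vanishes in $H^1(\Omega)$ as $\eps\to 0$ since $|\mcP_\eps|\to 0$ by~\eqref{hp:alpha}. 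The bulk part of $\mcF_\eps[\tilde Q_\eps]$ then converges to its counterpart in $\mcF_0[Q_0]$ by dominated convergence.

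The heart of the recovery step is the surface integral. Rescaling each inclusion by $y = x_\eps^{i,j} + \eps^\alpha R_\eps^{i,j} z$ converts
\[
 \eps^{3-2\alpha}\int_{\partial\mcP_\eps^{i,j}} f_s^j(\tilde Q_\eps,\nu)\,\d\sigma \,\approx\, \eps^3 \int_{\partial\mcP^j} f_s^j\bigl(Q_0(x_\eps^{i,j}),\, R_*^j(x_\eps^{i,j})\nu_{\mcP^j}\bigr)\,\d\sigma,
\]
so that summing in $i$ yields a Riemann sum tested against the empirical measure $\mu_\eps^j$, whose limit is identified by~\eqref{hp:conv} as $\int_\Omega \xi^j(x) f_{hom}^j(Q_0(x),x)\,\d x$. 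Summing over $j$ reconstructs $\int_\Omega f_{hom}(Q_0,x)\,\d x$, as required. Combined with Proposition~\ref{prop:liminf}, the upper bound yields $\mcF_0[Q_*]\leq\mcF_0[Q_0]$ for any weak $H^1$-limit $Q_*$ of $E_\eps Q_\eps$; since $\|Q_* - Q_0\|_{H^1(\Omega)}\leq\delta\leq\delta_0$, the isolated-minimizer hypothesis forces $Q_* = Q_0$, and the whole family converges weakly.

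To upgrade weak to strong $H^1$-convergence I invoke the strong convexity of $f_e$ from~\eqref{hp:fe}: combining the midpoint convexity estimate applied to $(\nabla E_\eps Q_\eps + \nabla Q_0)/2$ with the individual-term identity $\int_{\Omega_\eps} f_e(\nabla Q_\eps)\,\d x \to \int_\Omega f_e(\nabla Q_0)\,\d x$ (extracted from $\mcF_\eps[Q_\eps]\to\mcF_0[Q_0]$ through the separate liminf inequalities for the bulk and surface parts) gives $\nabla E_\eps Q_\eps \to \nabla Q_0$ in $L^2(\Omega)$. Hence $\|E_\eps Q_\eps - Q_0\|_{H^1(\Omega)}\to 0 < \delta$, the constraint is inactive for small $\eps$, and $Q_\eps$ is an unconstrained $H^1$-local minimizer of $\mcF_\eps$. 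The principal obstacle I foresee is the surface-integral analysis above: since $Q_0\in H^1$ has no pointwise trace on the many dispersed surfaces $\partial\mcP_\eps^{i,j}$, the Riemann-sum identification must be performed on a smooth approximation of $Q_0$, with the approximation error absorbed via~\eqref{hp:fs} and a suitably scaled trace inequality on each inclusion; the slow Lipschitz variation of $R_*^j$ across each inclusion from~\eqref{hp:R} contributes a further remainder that must be shown to vanish.
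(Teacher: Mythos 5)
Your overall scheme --- constrained minimization on a ball around $Q_0$, liminf inequality, recovery sequence, strong convexity to upgrade weak to strong convergence, constraint becoming inactive --- is exactly the paper's strategy, and your closing remarks about the Riemann-sum identification for the surface term (the need to smooth $Q_0$, control the error via~\eqref{hp:fs} and a scaled trace inequality, and handle the slow variation of $R_*^j$) correctly anticipate the role played by Lemmas~\ref{lemma:fhom}--\ref{lemma:Jeps} and Proposition~\ref{prop:formal}. Your strong-convexity argument and the recovery sequence $\tilde Q_\eps := Q_0|_{\Omega_\eps}$ are also consistent with the paper.

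The genuine gap is in the existence of the constrained minimizer. You state that existence ``will follow by the direct method'' using coercivity, non-negativity of $f_b$, and ``the locally Lipschitz growth~\eqref{hp:fs}'' --- but this glosses over the central technical difficulty: under~\eqref{hp:fs}, the surface energy density $f_s^j$ may be \emph{unbounded below} and has growth $|Q|^4$, which is exactly the critical exponent for the trace embedding $H^1(\Omega_\eps)\hookrightarrow L^4(\partial\mcP_\eps)$. Since that embedding is not compact, the surface integral alone is in general \emph{not} sequentially weakly lower semi-continuous, even on bounded subsets of $H^1$ (the paper cites an explicit counterexample, \cite[Lemma~3.10]{CanevariZarnescu}). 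Consequently the direct method does not apply as stated. The paper resolves this via Proposition~\ref{prop:lsc}: the potential loss of lower semi-continuity in the surface term is quantified by the trace inequality (Lemma~\ref{lem:trace_eps}), and because the surface integral carries the small prefactor $\eps^{3-2\alpha}$, this loss can be absorbed --- for $\eps$ below a threshold depending on the $H^1$-bound $M$ (here $M\sim\|Q_0\|_{H^1}+\delta$) --- by the gain coming from the strong convexity of $f_e$. Without this ingredient your constrained minimizer $Q_\eps$ need not exist, and the argument cannot get off the ground. You should invoke Proposition~\ref{prop:lsc} (or reproduce its proof) at this point, noting that it is precisely here that the restriction to a \emph{bounded} set and the smallness of $\eps$ are both essential.
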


The proof of Theorem~\ref{th:loc-min} follows a variational approach, 
and is based on the following fact:

\begin{prop} \label{prop:liminf}
 Let $Q_\eps\in H^1_g(\Omega_\eps, \, \mcS_0)$
 be such that $E_\eps Q_\eps\rightharpoonup Q$ weakly in~$H^1(\Omega)$
 as~$\eps\to 0$. Then, there holds
 \begin{gather}
  \int_{\Omega} \left(f_e(\nabla Q) + f_b(Q)\right)\d x 
   \leq \liminf_{\eps\to 0} 
   \int_{\Omega_\eps}  \left(f_e(\nabla Q_\eps) + f_b(Q_\eps)\right)\d x 
   \label{liminf_volume} \\
  \int_\Omega f_{hom}(Q, \, x) \, \d x 
   = \lim_{\eps\to 0} 
   \eps^{3-2\alpha} \sum_{j=1}^J \int_{\partial\mcP_\eps^j} 
   f_s^j(Q_\eps, \, \nu) \, \d\sigma \label{lim_jump}.
 \end{gather}
\end{prop}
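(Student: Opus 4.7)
The plan is to prove the two statements separately. For the volume liminf~\eqref{liminf_volume}, since $E_\eps Q_\eps \rightharpoonup Q$ weakly in $H^1(\Omega)$, Rellich's theorem gives strong $L^p$-convergence for every $p<6$ and, along a subsequence, a.e.\ convergence. For the bulk part, the indicator $\chi_{\Omega_\eps}$ tends to $1$ pointwise a.e.\ (since $|\mcP_\eps| \lesssim \eps^{3\alpha-3} \to 0$ by~\eqref{hp:alpha}), so $\chi_{\Omega_\eps} f_b(E_\eps Q_\eps) \to f_b(Q)$ a.e.\ by continuity of $f_b$, and Fatou's lemma yields the desired inequality for $f_b$. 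For the elastic part, convexity of $f_e$ from~\eqref{hp:fe} produces the standard weak-$L^2$ lower semicontinuity $\int_\Omega f_e(\nabla Q) \leq \liminf_\eps \int_\Omega f_e(\nabla E_\eps Q_\eps)$; I would then invoke the preliminary estimates from Subsection~\ref{subsec:prelim} on the harmonic extension in~\eqref{harmonic} to show that $\int_{\mcP_\eps} f_e(\nabla E_\eps Q_\eps) \to 0$, via the upper bound $f_e(D)\leq \lambda_e |D|^2$ from~\eqref{hp:fe}.

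For~\eqref{lim_jump}, the key is to exploit the scale separation $\eps^\alpha \ll \eps$ by freezing $Q_\eps$ to a mean value on each inclusion. Let $\bar Q_\eps^{i,j}$ be the average of $Q_\eps$ over a ball $B(x_\eps^{i,j}, \rho\eps)$, with $\rho$ small enough that these balls are pairwise disjoint and each contains its corresponding inclusion (permitted by~\eqref{hp:Omega_eps}). Rescaling via $x = x_\eps^{i,j} + \eps^\alpha R_\eps^{i,j} y$ maps $\partial\mcP_\eps^{i,j}$ onto $\partial\mcP^j$, introduces a factor $\eps^{2\alpha}$ on the surface measure, and transforms the inward unit normal into $R_*^j(x_\eps^{i,j})\nu_{\mcP^j}$ thanks to~\eqref{hp:R}. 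Combining trace-Poincar\'e estimates on $B(x_\eps^{i,j},\rho\eps)$ (furnished by Subsection~\ref{subsec:prelim}) with the locally Lipschitz bound~\eqref{hp:fs} on $f_s^j$, one controls the freezing error summed over all inclusions and obtains
\[
 \eps^{3-2\alpha}\sum_{j,i}\int_{\partial\mcP_\eps^{i,j}} f_s^j(Q_\eps,\,\nu)\,\d\sigma
 = \sum_{j=1}^J\eps^3\sum_{i=1}^{N_\eps^j} f_{hom}^j(\bar Q_\eps^{i,j},\, x_\eps^{i,j}) + o(1),
\]
where the identification of the right-hand side uses the very definition~\eqref{f_hom_j}.

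It then remains to identify the limit of this Riemann-sum-like expression. Using the weak-$*$ convergence $\mu_\eps^j \rightharpoonup^* \xi^j\,\d x$ from~\eqref{hp:conv}, the strong $L^p$-convergence of $E_\eps Q_\eps$ to $Q$ for $p<6$ (which, via a Lebesgue-differentiation argument, ensures that the local averages $\bar Q_\eps^{i,j}$ approximate $Q(x_\eps^{i,j})$ in the relevant integrated sense), and the continuity of $f_{hom}^j$ inherited from~\eqref{hp:fs}, this sum converges to $\sum_j\int_\Omega \xi^j(x)\,f_{hom}^j(Q(x),\, x)\,\d x$, which equals $\int_\Omega f_{hom}(Q,x)\,\d x$ by~\eqref{f_hom}.

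The main obstacle is the freezing estimate itself: the cubic growth prefactor in~\eqref{hp:fs} forces one to pair an $L^1$-type bound on $Q_\eps - \bar Q_\eps^{i,j}$ with an $L^2$-type bound on $|Q_\eps|^3 + 1$, both over $\partial\mcP_\eps^{i,j}$, and to sum these products over $(i,j)$. This succeeds because $\alpha < 3/2$ from~\eqref{hp:alpha} keeps the total boundary area compatible with the $\eps^{3-2\alpha}$ scaling, and because the trace constants on the rescaled inclusions are uniform, granted by the compactness and convexity assumption~\eqref{hp:P}. The argument broadly parallels that of~\cite{CanevariZarnescu}, but must be refined to track the $j$-dependence of the shapes and the inhomogeneity of the centres' distributions.
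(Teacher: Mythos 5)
Your approach to the elastic term has a genuine gap. You reduce~\eqref{liminf_volume} (for $f_e$) to showing that $\int_{\mcP_\eps} f_e(\nabla E_\eps Q_\eps)\to 0$, and you intend to deduce this from the harmonic-extension estimate $\|\nabla E_\eps Q_\eps\|_{L^2(\mcP_\eps)}\lesssim\|\nabla Q_\eps\|_{L^2(\mcR_\eps\setminus\mcP_\eps)}$ together with $|\mcR_\eps|\to 0$. But $\nabla Q_\eps$ is only \emph{bounded} in $L^2(\Omega_\eps)$ --- not equi-integrable --- so its integral over a shrinking set need not vanish: the gradient could concentrate near the inclusions. (Lemma~\ref{lem:extension}(ii) makes this conclusion only under \emph{strong} $H^1$ convergence.) The paper's proof sidesteps this by writing the convexity inequality $\int_{\Omega_\eps}\bigl(f_e(\nabla Q_\eps)-f_e(\nabla Q)\bigr)\geq\int_{\Omega_\eps}(\nabla f_e)(\nabla Q):(\nabla Q_\eps-\nabla Q)$ and splitting the right-hand side as an integral over~$\Omega$ minus one over~$\mcP_\eps$: the former vanishes by weak convergence, and the latter vanishes because it pairs the \emph{fixed} $L^2$ function $(\nabla f_e)(\nabla Q)$ against a merely bounded factor, and the $L^2$-norm of a fixed function over a vanishing set tends to zero by absolute continuity. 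That careful placement of fixed versus moving terms is what makes weak convergence enough, and it is not recoverable from your decomposition.

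For~\eqref{lim_jump}, freezing $Q_\eps$ to local averages $\bar Q_\eps^{i,j}$ and rescaling each inclusion is the same engine as the paper's Proposition~\ref{prop:formal}, but applied directly to the variable $Q_\eps$. The freezing error you highlight as the main obstacle is in fact handled by the trace machinery of Lemma~\ref{lem:trace_eps}; the step you treat as routine is the delicate one. The sum $\sum_{i,j}\eps^3 f_{hom}^j(\bar Q_\eps^{i,j},x_\eps^{i,j})$ tests the $\eps$-dependent measures $\mu_\eps^j$ against \emph{$\eps$-dependent} integrands; weak-$*$ convergence from~\eqref{hp:conv} says nothing directly about such pairings, and a ``Lebesgue differentiation'' invocation is a pointwise statement that does not control these sums by itself. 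You would need either a quantitative rate of convergence of $\mu_\eps^j$ (as in Assumption~\eqref{hpgamma:flat}, which is \emph{not} assumed in Proposition~\ref{prop:liminf}), or a further reduction replacing $\bar Q_\eps^{i,j}$ by averages of the fixed limit $Q$ and then approximating $Q$ by smooth maps. The paper organises exactly this reduction but in a cleaner two-step way: an ``approximate continuity'' estimate $|J_\eps[Q_2]-J_\eps[Q_1]|\lesssim\eps^{3/4-\alpha/2}+\|Q_2-Q_1\|_{L^4(\Omega)}$ coming from Lemma~\ref{lem:trace_eps}, which passes from $J_\eps[Q_\eps]$ to $J_\eps[Q]$ for the fixed limit (using strong $L^4$ convergence) and then from $Q$ to a smooth approximant, after which Proposition~\ref{prop:formal} does the freezing on a \emph{fixed, bounded Lipschitz} map against which the weak-$*$ convergence of $\mu_\eps^j$ applies directly.
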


Proposition~\ref{prop:liminf} can be reformulated as a $\Gamma$-convergence result.
Indeed, from Proposition~\ref{prop:liminf} we immediately have 
$\mcF_0\leq\Gamma\textrm{-}\liminf_{\eps\to 0}\mcF_\eps$ 
(with respect to a suitable topology, induced by the operator~$E_\eps$).
A trivial recovery sequence suffices to obtain the opposite~$\Gamma$-lim~sup inequality,
thanks to~\eqref{lim_jump} and the fact that in the dilute limit, $\abs{\mcP_\eps}\to 0$.
Theorem~\ref{th:loc-min} follows from Proposition~\ref{prop:liminf} by
general properties of the $\Gamma$-convergence.

Throughout the paper,
we will write $A\lesssim B$ as a short-hand for~$A\leq C B$,
where~$C$ is a positive constant, depending only on the domain,
the boundary datum and the free energy functional~\eqref{LdG},
but not on~$\eps$.

\subsection{Preliminary results}
\label{subsec:prelim}
The main technical tool is the following 
trace inequality, which is adapted from \cite[Lem\-ma~4.1]{ferromagnetic}.

\begin{lemma}[{\cite[Lemma~3.1]{CanevariZarnescu}}] \label{lem:trace}
 Let~$\mcP\subseteq\R^3$ be a compact,
 convex set whose interior contains the origin.
 Then, there exists a constant $C = C(\mcP)>0$
 such that, for any~$a>0$, $b\geq 2a$ and 
 any~$u\in H^1(b\mcP\setminus a\mcP)$, there holds
 \[
  \int_{\partial(a\mcP)} \abs{u}^4 \d\sigma \leq C 
  \int_{b\mcP\setminus a\mcP} \left(\abs{\nabla u}^2 + \abs{u}^6\right)\d x
  + \frac{Ca^2}{b^3} \int_{b\mcP\setminus a\mcP} \abs{u}^4\d x.
 \]
\end{lemma}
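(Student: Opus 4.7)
The approach is to bound the inner-boundary integral by a small fraction of the outer-boundary integral plus bulk terms, and then to absorb the outer-boundary integral using a standard trace estimate. A factor $a^2/b^2$ produced by the first step combines with a factor $1/b$ from the second to produce the coefficient $a^2/b^3$ in the conclusion.

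For the first step I exploit the convexity of $\mcP$ together with $0\in\mathrm{int}\,\mcP$: the radial map $(y,s)\in\partial\mcP\times(0,\infty)\mapsto sy\in\R^3\setminus\{0\}$ is bi-Lipschitz, with Jacobian $s^2(y\cdot\nu_{\partial\mcP}(y))$ bounded above and below by positive constants depending only on $\mcP$. For smooth $u$ the fundamental theorem of calculus along the ray $s\mapsto u(sy)$ gives $|u(ay)|^4 = |u(by)|^4 - 4\int_a^b |u(sy)|^2\, u(sy)\cdot\bigl(\nabla u(sy)\,y\bigr)\,\d s$. Integrating over $y\in\partial\mcP$ and switching to $x$-coordinates (via $\d\sigma_{r\mcP} = r^2\,\d\sigma_\mcP$ on the surfaces $\partial(r\mcP)$, and $\d x = s^2(y\cdot\nu)\,\d s\,\d\sigma(y)$ in the annulus) yields
\[
 \int_{\partial(a\mcP)}|u|^4\,\d\sigma \leq \frac{a^2}{b^2}\int_{\partial(b\mcP)}|u|^4\,\d\sigma + C a^2 \int_A \frac{|u|^3\,|\nabla u|}{\rho(x)^2}\,\d x,
\]
where $A := b\mcP\setminus a\mcP$ and $\rho$ is the Minkowski gauge of $\mcP$. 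Since $a^2/\rho(x)^2\leq 1$ on $A$, Young's inequality bounds the last term by $C\int_A(|\nabla u|^2 + |u|^6)\,\d x$.

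For the second step, pick a Lipschitz cut-off $\phi(x) := \chi(\rho(x))$ with $\chi(a) = 0$, $\chi(b) = 1$ and $|\chi'|\lesssim 1/b$, which is admissible because $b - a\geq b/2$. Apply the divergence theorem on $A$ to the vector field $W(x) := \phi(x)\,x$: one has $W = 0$ on $\partial(a\mcP)$; $W = x$ on $\partial(b\mcP)$, where $x\cdot\nu\geq c_\mcP\,b$ by convexity of $\mcP$; while $|W|\lesssim b$ and $|\mathrm{div}\,W| = |3\phi + x\cdot\nabla\phi|\lesssim 1$ throughout $A$. Expanding $\mathrm{div}(|u|^4 W)$ and bounding the cross term via Young's inequality gives
\[
 \int_{\partial(b\mcP)}|u|^4\,\d\sigma \leq \frac{C}{b}\int_A |u|^4\,\d x + C\int_A(|\nabla u|^2 + |u|^6)\,\d x.
\]
Substituting this into the estimate of the first step and using $a^2/b^2 \leq 1/4$ to absorb the resulting coefficient of $\int_A(|\nabla u|^2 + |u|^6)$ into a single constant yields the lemma.

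The main technical point is the radial decomposition of the first step: it is the convexity of $\mcP$, combined with $0\in\mathrm{int}\,\mcP$, that guarantees the bi-Lipschitz character of the radial map and hence the uniform control on the Jacobian needed for the change of variables. The extension from smooth $u$ to $u\in H^1(A)$ is a routine density argument, and step two is a standard divergence-theorem computation.
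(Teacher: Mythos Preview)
The paper does not prove this lemma: it is quoted verbatim from \cite[Lemma~3.1]{CanevariZarnescu} (and traced back further to \cite[Lemma~4.1]{ferromagnetic}), so there is no in-paper argument to compare against. Your proof is correct and self-contained. The two-step structure --- a radial fundamental-theorem-of-calculus identity linking $\partial(a\mcP)$ to $\partial(b\mcP)$, followed by a divergence-theorem trace bound on $\partial(b\mcP)$ --- cleanly produces the factor $a^2/b^2\cdot 1/b = a^2/b^3$, and your use of the Minkowski gauge together with the lower bound $y\cdot\nu_{\partial\mcP}(y)\geq c_\mcP>0$ (which is exactly what convexity and $0\in\mathrm{int}\,\mcP$ guarantee) handles the change of variables rigorously. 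The only points worth flagging are minor: the gauge $\rho$ is in general only Lipschitz, so $\nabla\rho$ and the cut-off $\phi$ are defined a.e., which is enough for the divergence theorem; and in Step~2 the cross term contributes $Cb\int_A|u|^3|\nabla u|$, whose factor $b$ is what cancels against the $1/b$ to give $C\int_A(|\nabla u|^2+|u|^6)$, as you indicate.
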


Given an inclusion~$\mcP_\eps^{i,j} = x_\eps^{i,j} + \eps^\alpha R_\eps^{i,j}\mcP^j$,
we consider~$\widehat{\mcP}_\eps^{i,j} := x_\eps^{i,j} + \mu\eps R_\eps^{i,j}\mcP^j$, 
where $\mu>0$ is a small (but fixed) parameter. By taking $\mu$ small enough,
we can make sure that the $\widehat{\mcP}_\eps^{i,j}$'s are pairwise disjoint.
Then, by applying Lemma~\ref{lem:trace} component-wise on
$\widehat{\mcP}_\eps^{i,j}\setminus\mcP_\eps^{i,j}$ and summing 
the corresponding inequalities over~$i$ and~$j$, we deduce

\begin{lemma} \label{lem:trace_eps}
 For any~$Q\in H^1(\Omega_\eps, \mcS_0)$, there holds
 \[
  \eps^{3-2\alpha} \int_{\partial\mcP_\eps} \abs{Q}^4 \d\sigma \lesssim 
    \eps^{3 - 2\alpha} \int_{\Omega_\eps} \left(\abs{\nabla Q}^2 + \abs{Q}^6\right)\d x
    + \int_{\Omega_\eps} \abs{Q}^4\d x.
 \]
\end{lemma}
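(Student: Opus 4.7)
The plan is to reduce the estimate on each individual inclusion to Lemma~\ref{lem:trace} by a suitable change of variables, and then sum the resulting pointwise inequalities. Fix $i$ and $j$, and note that the affine isometry $\Phi_\eps^{i,j}(y) := x_\eps^{i,j} + R_\eps^{i,j} y$ sends $\eps^\alpha\mcP^j$ onto $\mcP_\eps^{i,j}$ and $\mu\eps\mcP^j$ onto $\widehat{\mcP}_\eps^{i,j}$. Since $\alpha>1$, for every $\eps$ small enough one has $\eps^\alpha \leq (\mu\eps)/2$, so with $a := \eps^\alpha$ and $b := \mu\eps$ the hypotheses of Lemma~\ref{lem:trace} are satisfied with $\mcP = \mcP^j$. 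The pulled-back function $u := Q\circ\Phi_\eps^{i,j}$ lies in $H^1(\mu\eps\mcP^j\setminus\eps^\alpha\mcP^j)$, because the shell $\widehat{\mcP}_\eps^{i,j}\setminus\mcP_\eps^{i,j}$ is contained in $\Omega_\eps$ (by the disjointness of the $\widehat{\mcP}_\eps^{i,j}$'s and by~\eqref{hp:Omega_eps}).

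Applying Lemma~\ref{lem:trace} componentwise to $u$, which is legitimate since $\mcS_0$ is finite-dimensional and the norm $\abs{Q}$ is equivalent to the sum of the absolute values of the entries, and then undoing the isometric change of variables, I expect to obtain
\[
 \int_{\partial\mcP_\eps^{i,j}} \abs{Q}^4 \d\sigma \leq C \int_{\widehat{\mcP}_\eps^{i,j}\setminus\mcP_\eps^{i,j}} \left(\abs{\nabla Q}^2 + \abs{Q}^6\right)\d x + \frac{C\eps^{2\alpha}}{\mu^3\eps^3}\int_{\widehat{\mcP}_\eps^{i,j}\setminus\mcP_\eps^{i,j}} \abs{Q}^4 \d x,
\]
where $C$ depends on $\mcP^j$ only (and can be chosen uniform in $j$ since there are finitely many populations). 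The key arithmetic observation is that multiplying this inequality by $\eps^{3-2\alpha}$ converts the coefficient in front of the last integral into $C/\mu^3$, which is independent of $\eps$; this is precisely the reason for the scaling $\eps^{3-2\alpha}$ in the definition of $\mcF_\eps$.

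It then remains to sum over $i\in\{1,\ldots,N_\eps^j\}$ and $j\in\{1,\ldots,J\}$. The sum on the left-hand side equals $\eps^{3-2\alpha}\int_{\partial\mcP_\eps}\abs{Q}^4\d\sigma$, because the boundaries $\partial\mcP_\eps^{i,j}$ are pairwise disjoint. On the right-hand side, the disjointness of the shells $\widehat{\mcP}_\eps^{i,j}\setminus\mcP_\eps^{i,j}$, together with their inclusion in $\Omega_\eps$, allows one to bound the sum of integrals over these shells by the corresponding integral over all of $\Omega_\eps$, yielding the claimed estimate.

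I do not anticipate any serious obstacle: the argument is a combination of scaling, componentwise application of the pre-existing trace inequality, and summation over disjoint pieces. The only points that require a bit of care are (i) verifying that $b\geq 2a$ holds for $\eps$ sufficiently small, which is guaranteed by $\alpha>1$, and (ii) checking that the pullback of $Q$ by the isometry $\Phi_\eps^{i,j}$ preserves the integral identities exactly and that the $\mcP^j$-dependent constants in Lemma~\ref{lem:trace} can be replaced by a single constant thanks to the finiteness of $J$.
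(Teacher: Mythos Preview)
Your proposal is correct and follows exactly the approach sketched in the paper: apply Lemma~\ref{lem:trace} componentwise on each shell $\widehat{\mcP}_\eps^{i,j}\setminus\mcP_\eps^{i,j}$ with $a=\eps^\alpha$, $b=\mu\eps$, then multiply by $\eps^{3-2\alpha}$ and sum over~$i,j$ using the disjointness of the~$\widehat{\mcP}_\eps^{i,j}$. You have in fact supplied more detail than the paper does (the explicit verification that $b\geq 2a$ for small~$\eps$, and the coefficient arithmetic yielding~$C/\mu^3$), all of which is correct.
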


Another tool is the harmonic extension operator,
$E_\eps\colon H^1(\Omega_\eps, \, \mcS_0)\to H^1(\Omega, \, \mcS_0)$,
defined by~\eqref{harmonic}.

\begin{lemma} \label{lem:extension}
 The operator~$E_\eps\colon 
 H^1(\Omega_\eps, \, \mcS_0)\to H^1(\Omega, \, \mcS_0)$
 satisfies the following properties.
 \begin{enumerate}[label=(\roman*)]
  \item There exists a constant $C>0$ such that 
   $\|\nabla (E_\eps Q)\|_{L^2(\Omega)} \leq C \|\nabla Q\|_{L^2(\Omega_\eps)}$
   for any~$\eps>0$ and any $Q\in H^1(\Omega_\eps, \mcS_0)$.
  \item If the maps $Q_\eps\in H^1(\Omega, \, \mcS)$
  converge $H^1(\Omega)$-strongly to $Q$ as~$\eps\to 0$,
  then $E_\eps(Q_{\eps|\Omega_\eps})\to Q$ 
  strongly in~$H^1(\Omega)$ as~$\eps\to 0$, too.
 \end{enumerate}
\end{lemma}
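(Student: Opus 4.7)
The plan is to deduce (i) from the Dirichlet principle by constructing, on each inclusion, a competing extension whose Dirichlet energy is controlled by $\|\nabla Q\|_{L^2}$ on a thin collar just outside; (ii) will then follow from (i), linearity of $E_\eps$, and the fact that the total volume of the inclusions tends to zero.

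For (i), I would fix an inclusion $\mcP_\eps^{i,j} = x_\eps^{i,j} + \eps^\alpha R_\eps^{i,j} \mcP^j$ and, using \eqref{hp:Omega_eps} together with $\alpha>1$, pick a fixed constant $C_0 > 1$ so that for $\eps$ small the enlarged sets $\widetilde{\mcP}_\eps^{i,j} := x_\eps^{i,j} + C_0\eps^\alpha R_\eps^{i,j} \mcP^j$ are pairwise disjoint, contained in $\Omega$, with $\widetilde{\mcP}_\eps^{i,j} \setminus \mcP_\eps^{i,j} \subseteq \Omega_\eps$. Because the annular domain $C_0 \mcP^j \setminus \mcP^j$ is Lipschitz, there exists a bounded linear extension $T^j\colon H^1(C_0 \mcP^j \setminus \mcP^j) \to H^1(\mcP^j)$ preserving the trace on $\partial \mcP^j$. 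Applying the scaled version $T^j_\eps$ to $Q - \bar{Q}_\eps^{i,j}$, where $\bar{Q}_\eps^{i,j}$ is the average of $Q$ on the collar $\widetilde{\mcP}_\eps^{i,j} \setminus \mcP_\eps^{i,j}$, and using the Poincaré-Wirtinger inequality on the reference annulus to absorb the $L^2$-contribution into the gradient term, I expect the scale-invariant bound
\begin{equation*}
\|\nabla T^j_\eps(Q - \bar{Q}_\eps^{i,j})\|_{L^2(\mcP_\eps^{i,j})}^2 \lesssim \|\nabla Q\|_{L^2(\widetilde{\mcP}_\eps^{i,j} \setminus \mcP_\eps^{i,j})}^2
\end{equation*}
with constant independent of $\eps$, $i$, $j$. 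Since $T^j_\eps(Q - \bar{Q}_\eps^{i,j}) + \bar{Q}_\eps^{i,j}$ is an admissible extension of $Q|_{\partial \mcP_\eps^{i,j}}$ into $\mcP_\eps^{i,j}$ with the same gradient energy, the Dirichlet principle bounds $\|\nabla E_\eps Q\|_{L^2(\mcP_\eps^{i,j})}^2$ by the same quantity. Summing over $(i,j)$ and using the disjointness of the collars together with $\|\nabla E_\eps Q\|_{L^2(\Omega_\eps)} = \|\nabla Q\|_{L^2(\Omega_\eps)}$ then gives (i).

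For (ii), set $\tilde{Q}_\eps := E_\eps(Q_\eps|_{\Omega_\eps})$ and decompose, by linearity of $E_\eps$,
\begin{equation*}
\tilde{Q}_\eps - Q = E_\eps\bigl((Q_\eps - Q)|_{\Omega_\eps}\bigr) + \bigl(E_\eps(Q|_{\Omega_\eps}) - Q\bigr).
\end{equation*}
The gradient of the first summand has $L^2(\Omega)$-norm $\leq C\|\nabla(Q_\eps - Q)\|_{L^2(\Omega_\eps)} \to 0$ by (i). The second summand vanishes on $\Omega_\eps$, and the localised form of the estimate from (i) gives
$\|\nabla(E_\eps(Q|_{\Omega_\eps}) - Q)\|_{L^2(\mcP_\eps)}^2 \lesssim \|\nabla Q\|_{L^2(\bigcup_{i,j} \widetilde{\mcP}_\eps^{i,j})}^2$, which vanishes as $\eps\to 0$ by absolute continuity of $|\nabla Q|^2\,\d x$, since $|\bigcup_{i,j}\widetilde{\mcP}_\eps^{i,j}| \lesssim \eps^{3\alpha-3}\to 0$ by~\eqref{hp:alpha}. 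Hence $\|\nabla(\tilde{Q}_\eps - Q)\|_{L^2(\Omega)} \to 0$; since the trace of $\tilde{Q}_\eps - Q$ on $\partial\Omega$ coincides with $(Q_\eps - Q)|_{\partial\Omega}$ (the inclusions sit away from $\partial\Omega$) and tends to zero in $L^2(\partial\Omega)$ by continuity of the trace, a standard Poincaré-Friedrichs estimate $\|u\|_{L^2(\Omega)} \lesssim \|\nabla u\|_{L^2(\Omega)} + \|u\|_{L^2(\partial\Omega)}$ upgrades gradient convergence to strong $H^1(\Omega)$-convergence.

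The main obstacle is the scale-invariant extension bound underlying (i): a direct rescaling of the standard $H^1$-extension theorem produces a spurious $\eps^{-2\alpha}$ factor on the $L^2$-term, which would dominate, and the trick of subtracting the collar average $\bar{Q}_\eps^{i,j}$ before extending — so that Poincaré-Wirtinger on the reference annulus absorbs the $L^2$-contribution — is what makes the constant uniform in $\eps$.
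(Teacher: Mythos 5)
Your proof is correct and follows essentially the same route as the paper. The paper disposes of (i) by citing the key scaling estimate $\|\nabla(E_\eps Q)\|_{L^2(\mcP_\eps)}\lesssim\|\nabla Q\|_{L^2(\mcR_\eps\setminus\mcP_\eps)}$ from \cite[Lemma~3.4]{CanevariZarnescu}, and your Dirichlet-principle argument with the mean-zero subtraction and Poincar\'e--Wirtinger on the reference annulus is precisely the standard derivation of that estimate; your treatment of (ii) is the same localised triangle-inequality argument on $\mcP_\eps$ using $|\mcR_\eps|\to 0$, differing only in the routine way the $L^2$-part of the $H^1$-convergence is upgraded.
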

\begin{proof}
 For any~$i$, $j$, consider the inclusion 
 $\mcP_\eps^{i,j} = x_\eps^{i,j} + \eps^\alpha R_\eps^{i,j}\mcP^j$
 and let~$\mcR_{\eps}^{i,j} := x_\eps^{i,j} + 2\eps^\alpha R_\eps^{i,j}\mcP^j$.
 Let~$\mcR_{\eps} := \cup_{i,j} \mcR_{\eps}^{i,j}$.
 The properties of Laplace equation, combined with a
 scaling argument (see, e.g., \cite[Lemma~3.4]{CanevariZarnescu}),
 imply that
 \begin{equation} \label{harmonic1}
  \|\nabla (E_\eps Q)\|_{L^2(\mcP_\eps)} \lesssim 
  \|\nabla Q\|_{L^2(\mcR_\eps\setminus\mcP_\eps)}.
 \end{equation}
 Statement~(i) then follows immediately. To prove Statement~(ii),
 take a sequence~$Q_\eps\in H^1(\Omega, \,\mcS_0)$ that converges strongly to~$Q$ 
 as~$\eps\to 0$. Then,
 \[
  \begin{split}
   \norm{\nabla Q_\eps - \nabla (E_\eps(Q_{\eps|\Omega_\eps}))}_{L^2(\Omega)}
   &\leq \norm{\nabla Q_\eps}_{L^2(\mcP_\eps)} 
    + \norm{\nabla (E_\eps(Q_{\eps|\Omega_\eps}))}_{L^2(\mcP_\eps)} \\
   &\stackrel{\eqref{harmonic1}}{\lesssim}
    \norm{\nabla Q_\eps}_{L^2(\mcR_\eps)} 
    \lesssim \norm{\nabla Q}_{L^2(\mcR_\eps)}
    + \norm{\nabla Q - \nabla Q_\eps}_{L^2(\Omega)} \! .
  \end{split}
 \]
 Both terms in the right-hand side converge to~$0$
 as~$\eps\to 0$, because $|\mcR_\eps|\lesssim\eps^{3\alpha-3}\to 0$,
 and Statement~(ii) follows.
\end{proof}

\subsection{Proof of Theorem~\ref{th:loc-min}}
\label{subsec:proofhom}
The proof of Theorem~\ref{th:loc-min} is largely similar 
to that of \cite[Theorem~1.1]{CanevariZarnescu}. 
We reproduce here only some steps of the proof,
either because there require a modification or because
they will be useful in Section~\ref{sect:gamma}.

\paragraph*{Remarks on the lower semi-continuity of~$\mcF_\eps$.}

Even before we address the asymptotic analysis as~$\eps\to 0$,
we should make sure that, for fixed~$\eps>0$, the functional~$\mcF_\eps$
is sequentially lower semi-continuous with respect to 
the weak topology on~$H^1(\Omega_\eps, \, \mcS_0)$. If the surface energy
density~$f_s$ is bounded from below, then the surface integral is
lower semi-continuous by
Fatou lemma. If~$f_s$ has subcritical growth,
that is $\abs{f_s(Q)}\lesssim \abs{Q}^p + 1$ for some~$p<4$,
then the lower semi-continuity of 
the surface integral follows from the compact Sobolev embedding
$H^{1/2}(\partial\Omega_\eps, \, \mcS_0)\hookrightarrow L^p(\partial\Omega_\eps, \, \mcS_0)$
and Lebesgue's dominated convergence theorem. 
However, our assumption~\eqref{hp:fs} allows for surface energy densities
that have quartic growth and are unbounded from below, e.g. 
$f_s(Q) := -\abs{Q}^4$. In this case, the surface integral alone
may \emph{not} be sequentially weakly lower-semi 
continuous \cite[Lemma~3.10]{CanevariZarnescu}.
However, lower semi-continuity may be restored at 
least on \emph{bounded} subsets of~$H^1(\Omega_\eps, \, \mcS_0)$,
when~$\eps$ is small:

\begin{prop}[{\cite[Proposition~3.9]{CanevariZarnescu}}] \label{prop:lsc}
 Suppose that the assumptions~\eqref{hp:first}--\eqref{hp:last} are satisfied.
 For any~$M>0$ there exists~$\eps_0(M)>0$ such that following statement holds:
 if~$0 < \eps\leq \eps_0(M)$,
 $Q_k\rightharpoonup Q$ weakly in~$H^1_g(\Omega_\eps, \, \mcS_0)$
 and if~$\|\nabla Q_k\|_{L^2(\Omega_\eps)}\leq M$ for any~$k$, then
 \[
  \mcF_\eps[Q] \leq \liminf_{k\to+\infty} \mcF_\eps[Q_k].
 \]
\end{prop}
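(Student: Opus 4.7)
My plan is to split $\mcF_\eps[Q] = F^V_\eps[Q] + F^S_\eps[Q]$ into volume and surface contributions and establish weak lower semicontinuity of each piece separately under the prescribed gradient bound, using the trace inequality in Lemma~\ref{lem:trace_eps} to handle the critical quartic growth of the surface term.

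For the volume part $F^V_\eps[Q] := \int_{\Omega_\eps}(f_e(\nabla Q)+f_b(Q))\d x$, weak lower semicontinuity follows by classical arguments. Strong convexity of $f_e$ with parameter $\theta$, together with weak convergence $\nabla Q_k\rightharpoonup\nabla Q$, yields via the subgradient inequality
\[
 \liminf_{k\to\infty}\int_{\Omega_\eps} f_e(\nabla Q_k)\d x
 \geq \int_{\Omega_\eps} f_e(\nabla Q)\d x + \theta\liminf_{k\to\infty}\norm{\nabla(Q_k-Q)}_{L^2(\Omega_\eps)}^2.
\]
For the bulk term, non-negativity and continuity of $f_b$, combined with a.e.\ convergence along a subsequence extracted from the compact embedding $H^1(\Omega_\eps)\hookrightarrow L^p(\Omega_\eps)$ for $p<6$, give $\int f_b(Q)\leq\liminf\int f_b(Q_k)$ by Fatou's lemma.

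For the surface part, I would estimate the oscillation $\abs{F^S_\eps[Q_k]-F^S_\eps[Q]}$ using the Lipschitz bound~\eqref{hp:fs} and H\"older's inequality with conjugate exponents $(4/3,4)$:
\[
 \eps^{3-2\alpha}\int_{\partial\mcP_\eps}\abs{f^j_s(Q_k,\nu)-f^j_s(Q,\nu)}\d\sigma
 \lesssim A_k^{3/4}\, B_k^{1/4},
\]
with $A_k := \eps^{3-2\alpha}\int_{\partial\mcP_\eps}(\abs{Q_k}^4+\abs{Q}^4+1)\d\sigma$ and $B_k := \eps^{3-2\alpha}\int_{\partial\mcP_\eps}\abs{Q_k-Q}^4\d\sigma$. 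By Lemma~\ref{lem:trace_eps}, Poincar\'e's inequality (to control $\norm{Q_k}_{H^1(\Omega_\eps)}$ by $\norm{\nabla Q_k}_{L^2}\leq M$ and by the boundary datum $g$), and the Sobolev embedding $H^1\hookrightarrow L^6$, the factor $A_k$ is uniformly bounded by some $C(M,g)$. Applying Lemma~\ref{lem:trace_eps} to $Q_k-Q$ and using Poincar\'e once more,
\[
 B_k \lesssim \eps^{3-2\alpha}(1+M^4)\norm{\nabla(Q_k-Q)}_{L^2}^2 + \norm{Q_k-Q}_{L^4(\Omega_\eps)}^4,
\]
whose last summand vanishes as $k\to\infty$ by the compact embedding $H^1\hookrightarrow L^4$ on the fixed Lipschitz domain $\Omega_\eps$. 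Combining the pieces,
\[
 \limsup_{k\to\infty}\abs{F^S_\eps[Q_k]-F^S_\eps[Q]}
 \lesssim C(M,g)\,\eps^{(3-2\alpha)/4}\Bigl(\liminf_{k\to\infty}\norm{\nabla(Q_k-Q)}_{L^2}^2\Bigr)^{1/4}.
\]

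Finally, choosing $\eps_0(M)$ small enough, I would absorb this residual error into the quadratic gain $\theta\liminf\norm{\nabla(Q_k-Q)}_{L^2}^2$ from the volume side via Young's inequality, obtaining $\mcF_\eps[Q]\leq\liminf\mcF_\eps[Q_k]$. The main obstacle is the critical quartic growth of $f^j_s$, which sits exactly at the non-compact trace embedding exponent $H^1(\Omega_\eps)\hookrightarrow L^4(\partial\Omega_\eps)$: the standard subcritical compactness argument is unavailable, so Lemma~\ref{lem:trace_eps} must be used to transfer borderline surface $L^4$-norms into volume norms, and the small parameter $\eps^{3-2\alpha}$ in front of the surface integral is what furnishes the margin needed to absorb the borderline term against the strongly convex elastic energy.
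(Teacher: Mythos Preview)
Your overall strategy---extract a gain $\theta\,t_k$, with $t_k:=\norm{\nabla(Q_k-Q)}_{L^2(\Omega_\eps)}^2$, from the strong convexity of~$f_e$, bound the surface defect via Lemma~\ref{lem:trace_eps}, and absorb one into the other for small~$\eps$---is exactly the mechanism the paper describes. The gap is in the final absorption. Your H\"older split produces a surface defect of order $C(M)\,\eps^{(3-2\alpha)/4}\,t_k^{1/4}$, to be compared with the gain $\theta t_k$. But Young's inequality only gives
$C(M)\,\eps^{(3-2\alpha)/4}\,t_k^{1/4}\le \theta t_k + C'(M,\theta)\,\eps^{(3-2\alpha)/3}$,
and the residual $C'(M,\theta)\,\eps^{(3-2\alpha)/3}$ does \emph{not} vanish as $k\to\infty$. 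Equivalently, $t\mapsto \theta t - C\eps^{(3-2\alpha)/4} t^{1/4}$ is strictly negative for $0<t\lesssim \eps^{(3-2\alpha)/3}$, and nothing prevents $t_k$ from sitting in this range; no choice of $\eps_0(M)$ repairs this, since $t_k$ depends on the particular sequence, not on~$M$ alone. Your argument therefore yields only $\liminf_k\mcF_\eps[Q_k]\ge\mcF_\eps[Q]-C'\eps^{(3-2\alpha)/3}$, short of the claim. (A minor additional slip: the right-hand side of your $\limsup$ estimate should carry $\limsup_k t_k$, not $\liminf_k t_k$.)

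The remedy is to make the surface defect \emph{linear} in~$t_k$ rather than of order~$t_k^{1/4}$. Use $|Q_k|^3\lesssim |Q_k-Q|^3+|Q|^3$ to write
$(|Q_k|^3+|Q|^3+1)\,|Q_k-Q|\lesssim |Q_k-Q|^4+(|Q|^3+1)\,|Q_k-Q|$.
The second contribution tends to~$0$ as $k\to\infty$ for fixed~$\eps$: truncate at $\{|Q|\le R\}$ and use the \emph{compact} trace embedding $H^1(\Omega_\eps)\hookrightarrow L^p(\partial\mcP_\eps)$, $p<4$, on the bounded part, while the tail is controlled by $(\int_{\{|Q|>R\}}|Q|^4\,\d\sigma)^{3/4}B_k^{1/4}\le C(M)(\int_{\{|Q|>R\}}|Q|^4\,\d\sigma)^{3/4}\to 0$ as $R\to\infty$. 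The first contribution is exactly~$B_k$, and your own estimate gives $B_k\lesssim \eps^{3-2\alpha}(1+M^4)\,t_k+o_k(1)$. Hence $|F^S_\eps[Q_k]-F^S_\eps[Q]|\le C(M)\,\eps^{3-2\alpha}\,t_k+o_k(1)$, and choosing $\eps_0(M)$ so that $C(M)\,\eps_0^{3-2\alpha}<\theta$ closes the argument cleanly.
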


The proof carries over from~\cite{CanevariZarnescu}, almost word by word,
using Lemma~\ref{lem:trace_eps}.
Essentially, the loss of lower semi-continuity
that may arise from the surface integral can be quantified,
with the help of Lemma~\ref{lem:trace_eps} and the bound
on~$\nabla Q_k$. However,
since the surface integral is multiplied by a \emph{small}
factor~$\eps^{3 - 2\alpha}$, this loss of lower semi-continuity
is compensated by the strong convexity of the elastic term~$f_e$,
for~$\eps$ sufficiently small.

\paragraph*{Pointwise convergence of the surface energy terms.}
For ease of notation, let us define
\begin{gather}
 J_\eps[Q] := \eps^{3-2\alpha} \sum_{j=1}^J
  \int_{\partial\mcP_\eps^j} f_s^j(Q, \, \nu)\, \d\sigma \label{Jeps} \\
 J_0[Q] := \int_\Omega f_{hom} (Q, \, x) \, \d x \qquad
 \textrm{for } Q\in H^1_g(\Omega, \mcS_0) \label{J0}
\end{gather}

We state some properties of the functions
$f_{hom}^j\colon\mcS_0\times\overline{\Omega}\to\R$,
$f_{hom}\colon\mcS_0\times\overline{\Omega}\to\R$,
defined by~\eqref{f_hom}, \eqref{f_hom_j} respectively.
\begin{lemma} \label{lemma:fhom}
 For any~$j\in\{1, \, \ldots, \, J\}$, the function~$f_{hom}^j$
 is locally Lispchitz-continuous, and there holds
 \begin{equation}
  \abs{f_{hom}^j(Q, \, x)} \lesssim \abs{Q}^4 + 1, \qquad 
  \abs{\nabla f_{hom}^j(Q, \, x)} \lesssim 
  \abs{Q}^3 + 1  \label{fhomj-Lipschitz} 
 \end{equation}
 for any~$(Q, \, x)\in\mcS_0\times\overline{\Omega}$.
 Moroever, the function $f_{hom}$ satisfies
 \begin{equation} \label{fhom-Lipschitz}
  \abs{f_{hom}(Q_1, \, x) - f_{hom}(Q_2, \, x)} \lesssim
  \left(\abs{Q_1}^3 + \abs{Q_2}^3 + 1\right) \abs{Q_1 - Q_2}
 \end{equation}
 for any~$Q_1$, $Q_2\in\mcS_0$ and any~$x\in\overline{\Omega}$.
\end{lemma}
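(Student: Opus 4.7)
The plan is to derive all three estimates by applying assumption~\eqref{hp:fs} pointwise under the surface integral that defines~$f_{hom}^j$, combined with the Lipschitz regularity of~$R_*^j$ provided by~\eqref{hp:R}. Since~$\partial\mcP^j$ has finite surface measure by compactness (assumption~\eqref{hp:P}), integrating the resulting pointwise bounds will yield the claims directly.

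First, for the growth bound $|f_{hom}^j(Q,x)| \lesssim |Q|^4 + 1$ in~\eqref{fhomj-Lipschitz}, I would apply~\eqref{hp:fs} with $Q_1 = Q$, $Q_2 = 0$ to obtain
\[
|f_s^j(Q, R_*^j(x)\nu_{\mcP^j})| \leq |f_s^j(0, R_*^j(x)\nu_{\mcP^j})| + \lambda_s(|Q|^3 + 1)|Q|.
\]
The first term on the right is bounded uniformly, since $\nu \mapsto f_s^j(0,\nu)$ is continuous on the compact sphere~$\mathbb{S}^2$; the second is $\lesssim |Q|^4 + 1$. For the bound on~$\nabla f_{hom}^j$, I interpret the gradient in the almost-everywhere (or Clarke) sense, which is legitimate because $Q \mapsto f_{hom}^j(Q,x)$ is locally Lipschitz (see next paragraph), so Rademacher's theorem applies. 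At points of differentiability the Lipschitz constant in~\eqref{hp:fs} gives $|\nabla_Q f_s^j(Q, \nu)| \leq \lambda_s(|Q|^3 + 1)$, and integration over $\partial\mcP^j$ produces the desired estimate.

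For the joint local Lipschitz continuity of~$f_{hom}^j$ in~$(Q, x)$, I would bound
\[
|f_s^j(Q_1, R_*^j(x_1)\nu_{\mcP^j}) - f_s^j(Q_2, R_*^j(x_2)\nu_{\mcP^j})|
\]
using~\eqref{hp:fs}, noting that $|R_*^j(x_1)\nu_{\mcP^j} - R_*^j(x_2)\nu_{\mcP^j}| \lesssim |x_1 - x_2|$ thanks to~\eqref{hp:R} and $|\nu_{\mcP^j}| = 1$. Integration then gives a bound of the form $\lambda_s(|Q_1|^3 + |Q_2|^3 + 1)(|Q_1 - Q_2| + |x_1 - x_2|)$, which is exactly the desired local Lipschitz control. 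Finally, the estimate~\eqref{fhom-Lipschitz} for~$f_{hom}$ in~$Q$ follows by specialising to $x_1 = x_2 = x$, summing over the finitely many populations $j\in\{1,\ldots,J\}$, and absorbing the weights $\xi^j\in L^\infty(\Omega)$ into the implicit constant.

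There is no real obstacle in this argument: everything reduces to bringing~\eqref{hp:fs} under an integral over a fixed compact surface. The only minor subtlety is the interpretation of $\nabla f_{hom}^j$, since~$f_s^j$ is not assumed to be of class~$C^1$; this is resolved by reading the gradient in the a.e.\ sense via Rademacher's theorem, which suffices for the way the estimate will be used downstream.
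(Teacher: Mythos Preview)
Your proposal is correct and follows essentially the same route as the paper: estimate the integrand via~\eqref{hp:fs}, use the Lipschitz continuity of~$R_*^j$ from~\eqref{hp:R}, integrate over the compact surface~$\partial\mcP^j$, and then sum over~$j$ with the weights~$\xi^j\in L^\infty$. The only cosmetic difference is that the paper first derives the single joint Lipschitz estimate
\[
|f_{hom}^j(Q_1,x_1)-f_{hom}^j(Q_2,x_2)|\lesssim(|Q_1|^3+|Q_2|^3+1)(|Q_1-Q_2|+|x_1-x_2|)
\]
and reads off both bounds in~\eqref{fhomj-Lipschitz} from it, whereas you handle the growth bound and the gradient bound separately.
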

\begin{proof}
 Using the definition~\eqref{f_hom_j} of~$f_{hom}^j$,
 and the assumption~\eqref{hp:fs}, we obtain
 \[
  \begin{split}
   &\abs{f_{hom}^j(Q_1, \, x_1) - f_{hom}^j(Q_2, \, x_2)} 
   \leq \int_{\partial\mcP^j} \abs{f_s^j(Q_1, \, R_*^j(x_1)\nu_{\mcP^j})
     - f_s^j(Q_2, \, R_*^j(x_2)\nu_{\mcP^j})} \d\sigma \\
   &\qquad\qquad \lesssim \int_{\partial\mcP^j} \left(\abs{Q_1}^3 + \abs{Q_2}^3 + 1\right)
     \left(\abs{Q_1 - Q_2} + \abs{\left(R_*^j(x_1) - R_*^j(x_2)\right)\nu_{\mcP^j}}\right)\d\sigma
  \end{split}
 \]
 Since~$R_*^j$ is Lipschitz-continuous by~\eqref{hp:R}, we deduce
 \begin{equation} \label{fhom1}
  \begin{split}
   &\abs{f_{hom}^j(Q_1, \, x_1) - f_{hom}^j(Q_2, \, x_2)} 
   \lesssim \left(\abs{Q_1}^3 + \abs{Q_2}^3 + 1\right)
     \left(\abs{Q_1 - Q_2} + \abs{x_1 - x_2}\right)
  \end{split}
 \end{equation}
 and~\eqref{fhom-Lipschitz} follows. We multiply the
 previous inequality by~$\xi^j$, where~$\xi^j$ is given by~\eqref{hp:conv},
 take~$x_1 = x_2 = x$, and sum over~$j$. Since~$\xi^j\in L^\infty(\Omega)$
 by Assumption~\eqref{hp:conv}, we obtain
 \begin{equation*}
  \begin{split}
   \abs{f_{hom}(Q_1, \, x) - f_{hom}(Q_2, \, x)}
   &\stackrel{\eqref{f_hom}}{\leq} \sum_{j=1}^J \norm{\xi^j}_{L^\infty(\Omega)}
     \abs{f_{hom}^j(Q_1, \, x) - f_{hom}^j(Q_2, \, x)} \\
   &\lesssim \left(\abs{Q_1}^3 + \abs{Q_2}^3 + 1\right)
     \abs{Q_1 - Q_2} \! .  \qedhere
  \end{split}
 \end{equation*}
\end{proof}

Let us introduce the auxiliary quantity
\begin{equation} \label{Jtilde_eps}
  \tilde{J}_\eps[Q] := \eps^{3-2\alpha} \sum_{j=1}^J \sum_{i=1}^{N_\eps^j}
  \int_{\partial\mcP_\eps^{i,j}} f_s^j(Q(x_\eps^{i,j}), \, \nu)\, \d\sigma.
\end{equation}
\begin{lemma} \label{lemma:Jeps}
 For any bounded, Lipschitz map
 $Q\colon\overline{\Omega}\to\mcS_0$, there holds
 \begin{equation} \label{Jtilde_eps_repr} 
  \tilde{J}_\eps[Q] = \sum_{j=1}^J
    \int_{\R^3} f_{hom}^j(Q(x), \, x) \, \d\mu_\eps^j(x)
 \end{equation}
 (where the measures~$\mu_\eps^j$ are defined by~\eqref{hp:conv}), and
 \begin{equation} \label{Jtilde_eps_conv}
  \abs{J_\eps[Q] - \tilde{J}_\eps[Q]} \lesssim 
   \eps^{\alpha} \left(\norm{Q}_{L^\infty(\Omega)}^3 + 1\right) 
   \norm{\nabla Q}_{L^\infty(\Omega)} \! .
 \end{equation}
\end{lemma}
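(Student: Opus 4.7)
\smallskip

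\textbf{Proof plan for Lemma~\ref{lemma:Jeps}.} The plan is to treat the two statements separately, deriving the representation formula~\eqref{Jtilde_eps_repr} from a direct change of variables on each inclusion, and then proving~\eqref{Jtilde_eps_conv} by using the Lipschitz-type bound~\eqref{hp:fs} on~$f_s^j$ together with the mean value theorem for~$Q$ and a counting argument based on~\eqref{hp:Omega_eps}.

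For~\eqref{Jtilde_eps_repr}, I would fix~$i$ and~$j$ and parametrise $\partial\mcP_\eps^{i,j}$ via the affine map $y\in\partial\mcP^j \mapsto x_\eps^{i,j} + \eps^\alpha R_\eps^{i,j} y$. Under this map, the surface measure scales by the factor~$\eps^{2\alpha}$ (rotations are isometries), and the exterior unit normal~$\nu$ to~$\Omega_\eps$ at a boundary point $x\in\partial\mcP_\eps^{i,j}$, which points into the inclusion, is transported from the inward unit normal $\nu_{\mcP^j}$ at the corresponding point $y\in\partial\mcP^j$ via $\nu = R_\eps^{i,j}\nu_{\mcP^j}(y)$. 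Using also the hypothesis~\eqref{hp:R}, namely $R_\eps^{i,j} = R_*^j(x_\eps^{i,j})$, and that the integrand has~$Q$ frozen at~$x_\eps^{i,j}$, the single-inclusion integral equals $\eps^{2\alpha} f_{hom}^j(Q(x_\eps^{i,j}), x_\eps^{i,j})$ by the very definition~\eqref{f_hom_j}. Summing over~$i$ and~$j$ and recognising the prefactor $\eps^{3-2\alpha}\cdot\eps^{2\alpha} = \eps^3$ as the weight of~$\delta_{x_\eps^{i,j}}$ in~$\mu_\eps^j$ (see~\eqref{hp:conv}) gives exactly~\eqref{Jtilde_eps_repr}.

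For~\eqref{Jtilde_eps_conv}, I would write
\[
 J_\eps[Q] - \tilde{J}_\eps[Q]
  = \eps^{3-2\alpha}\sum_{j=1}^J\sum_{i=1}^{N_\eps^j}
   \int_{\partial\mcP_\eps^{i,j}} \bigl(f_s^j(Q(x), \, \nu) - f_s^j(Q(x_\eps^{i,j}), \, \nu)\bigr)\d\sigma(x),
\]
and bound the integrand using~\eqref{hp:fs} by $\lambda_s(\abs{Q(x)}^3 + \abs{Q(x_\eps^{i,j})}^3 + 1)\abs{Q(x) - Q(x_\eps^{i,j})}$. Since $Q$ is Lipschitz and each $x\in\partial\mcP_\eps^{i,j}$ satisfies $\abs{x - x_\eps^{i,j}}\leq \eps^\alpha\,\mathrm{diam}(\mcP^j)\lesssim \eps^\alpha$, this integrand is controlled by $(\norm{Q}_{L^\infty(\Omega)}^3+1)\norm{\nabla Q}_{L^\infty(\Omega)}\eps^\alpha$. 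What remains is to estimate the total surface area: each inclusion has $\sigma(\partial\mcP_\eps^{i,j}) \lesssim \eps^{2\alpha}$, and by the separation hypothesis~\eqref{hp:Omega_eps} the number of inclusions satisfies $\sum_j N_\eps^j \lesssim \eps^{-3}$, so $\sigma(\partial\mcP_\eps)\lesssim \eps^{2\alpha-3}$. Combining these bounds yields $\eps^{3-2\alpha}\cdot\eps^\alpha\cdot\eps^{2\alpha-3} = \eps^\alpha$, which is the claimed estimate.

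Neither step presents a serious obstacle: the representation is a book-keeping exercise once one is careful about the orientation of~$\nu$ and the use of~\eqref{hp:R}, and the quantitative bound reduces to combining the Lipschitz continuity of~$Q$, the diameter of a single inclusion, and the surface area count. The main point requiring attention is the orientation of the normal in the change of variables, to ensure it matches the inward normal~$\nu_{\mcP^j}$ that appears in the definition~\eqref{f_hom_j} of~$f_{hom}^j$.
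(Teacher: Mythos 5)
Your proposal is correct and follows essentially the same route as the paper: the same affine change of variables $y\mapsto x_\eps^{i,j}+\eps^\alpha R_\eps^{i,j}y$ with the $\eps^{2\alpha}$ scaling of the surface measure, the same transport $\nu=R_\eps^{i,j}\nu_{\mcP^j}$ combined with~\eqref{hp:R} to recover $f_{hom}^j$, and for the error bound the same use of~\eqref{hp:fs}, the Lipschitz estimate $|Q(x)-Q(x_\eps^{i,j})|\lesssim\eps^\alpha\norm{\nabla Q}_{L^\infty}$, and the surface area count $\sigma(\partial\mcP_\eps)\lesssim\eps^{2\alpha-3}$ via~\eqref{hp:Omega_eps}. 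Your attention to the orientation of the normal (inward to the inclusion, outward to $\Omega_\eps$) is exactly the bookkeeping point that makes the representation match the definition~\eqref{f_hom_j}.
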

\begin{proof}
 For any~$i$ and~$j$, consider the single
 inclusion~$\mcP_\eps^{i,j} := x_\eps^{i,j} 
 + \eps^\alpha R_\eps^{i,j}\mcP_\eps^j$. 
 Since $\nu(x) = R_\eps^{i,j} \, \nu_{\mcP^j}
 (\eps^{-\alpha}(R_\eps^{i,j})^{\mathsf{T}}(x-x_\eps^{i,j}))$
 for any~$x\in\partial\mcP_\eps^{i,j}$, by a change of variable we obtain
 \[
  \begin{split}
   \tilde{J}_\eps[Q] &= \eps^3 \sum_{j=1}^J \sum_{i=1}^{N_\eps^j}
   \int_{\partial\mcP^j} f_s^j(Q(x_\eps^{i,j}), \, R_\eps^{i,j}\nu_{\mcP})\, \d\sigma \\
   &\stackrel{\eqref{hp:R}}{=} 
   \eps^3 \sum_{j=1}^J \sum_{i=1}^{N_\eps^j} \int_{\partial\mcP^j} 
     f_s^j(Q(x_\eps^{i,j}), \, R_*^j(x_\eps^{i,j})\nu_{\mcP})\, \d\sigma \\
   &\stackrel{\eqref{f_hom}}{=} \eps^3 \sum_{j=1}^J \sum_{i=1}^{N_\eps^j} 
     f_{hom}^j(Q(x_\eps^{i,j}), \, x_\eps^{i,j}).
  \end{split}
 \]
 Now, \eqref{Jtilde_eps_repr} follows from the
 definition of~$\mu_\eps^j$, \eqref{hp:conv}.
 On the other hand, by decomposing the integral on~$\partial\mcP^j$
 as a sum of integrals over the boundary of each inclusion, we obtain
 \[
  \begin{split}
   \abs{J_\eps[Q] - \tilde{J}_\eps[Q]}
   &\lesssim \eps^{3-2\alpha} 
   \sum_{j=1}^J \sum_{i=1}^{N_\eps^j} \int_{\partial\mcP_\eps^{i,j}}
   \abs{f_s^j(Q(x), \, \nu) - f_s^j(Q(x_\eps^{i,j}), \, \nu)} \d \sigma(x) \\
   &\stackrel{\eqref{hp:fs}}{\lesssim} \eps^{3-2\alpha} 
   \sum_{i=1}^{N_\eps^j} \sum_{j=1}^J \int_{\partial\mcP_\eps^{i,j}}
   \left(\abs{Q(x)}^3 + \abs{Q(x_\eps^{i,j})}^3 + 1\right)
   \abs{Q(x) - Q(x_\eps^{i,j})} \d \sigma(x) \\
  \end{split}
 \]
 Since~$Q$ is assumed to be Lipschitz continuous and the diameter
 of~$\mcP_\eps^{i,j}$ is~$\lesssim\eps^\alpha$, we have 
 $|Q(x) - Q(x_\eps^{i,j})|\lesssim \eps^\alpha 
 \norm{\nabla Q}_{L^\infty(\Omega)}$. This implies
 \[
  \abs{J_\eps[Q] - \tilde{J}_\eps[Q]} \lesssim
   \eps^{3-\alpha} \sigma(\partial\mcP_\eps)
   \left(\norm{Q}_{L^\infty(\R^3)}^3 + 1\right) 
   \norm{\nabla Q}_{L^\infty(\Omega)} \! .
 \]
 Finally, we note that $\sigma(\partial\mcP_\eps)\lesssim\eps^{2\alpha-3}$,
 because there are $\mathrm{O}(\eps^{-3})$ inclusions
 (as a consequence of~\eqref{hp:Omega_eps}) and the surface area of 
 each inclusion is~$\mathrm{O}(\eps^{2\alpha})$. 
 Thus, the lemma follows.
\end{proof}

Since~$\mu_\eps^j\rightharpoonup^* \xi^j\d x$ by 
Assumption~\eqref{hp:conv}, as an immediate consequence
of Lemma~\ref{lemma:Jeps} and~\eqref{f_hom}, \eqref{J0} we obtain

\begin{prop} \label{prop:formal}
 For any \emph{bounded, Lipschitz} map
 $Q\colon\overline{\Omega}\to\mcS_0$, there holds
 $J_\eps[Q]\to J_0[Q]$ as~$\eps\to 0$.
\end{prop}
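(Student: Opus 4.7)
\textbf{Proof plan for Proposition~\ref{prop:formal}.}
The strategy is to split
\[
 \abs{J_\eps[Q] - J_0[Q]} \leq \abs{J_\eps[Q] - \tilde J_\eps[Q]} + \abs{\tilde J_\eps[Q] - J_0[Q]}
\]
and to show that each term vanishes as $\eps\to 0$, using Lemma~\ref{lemma:Jeps} for the first and the weak-$*$ convergence of the measures $\mu_\eps^j$ for the second.

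The first term is immediate from the bound~\eqref{Jtilde_eps_conv}: since $Q$ is bounded and Lipschitz on $\overline{\Omega}$, we get
\[
 \abs{J_\eps[Q] - \tilde J_\eps[Q]} \lesssim \eps^\alpha \left(\norm{Q}_{L^\infty(\Omega)}^3 + 1\right)
 \norm{\nabla Q}_{L^\infty(\Omega)} \longrightarrow 0
\]
as $\eps\to 0$, because $\alpha>0$ by~\eqref{hp:alpha}.

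For the second term, I would use the representation~\eqref{Jtilde_eps_repr}, which expresses $\tilde J_\eps[Q]$ as $\sum_{j=1}^J \int_{\R^3} \varphi^j \, \d\mu_\eps^j$, with $\varphi^j(x) := f_{hom}^j(Q(x), \, x)$. Since $Q$ is Lipschitz and bounded on $\overline{\Omega}$, and $f_{hom}^j$ is locally Lipschitz on $\mcS_0\times\overline{\Omega}$ by Lemma~\ref{lemma:fhom}, each $\varphi^j$ is continuous and bounded on $\overline{\Omega}$. Using a Tietze-type extension, I may assume $\varphi^j\in C_c(\R^3)$; this modification does not affect $\int_{\R^3}\varphi^j \, \d\mu_\eps^j$ since $\supp \mu_\eps^j \subseteq \overline{\Omega}$, nor $\int_{\R^3} \varphi^j \xi^j \, \d x$ since $\xi^j$ is supported in $\overline{\Omega}$. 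Assumption~\eqref{hp:conv} then gives
\[
 \int_{\R^3} \varphi^j \, \d\mu_\eps^j \longrightarrow \int_{\R^3} \varphi^j(x) \, \xi^j(x) \, \d x
 = \int_\Omega \xi^j(x) \, f_{hom}^j(Q(x), \, x) \, \d x
\]
for each $j$, and summing over $j$ and using the definition~\eqref{f_hom} of $f_{hom}$ together with~\eqref{J0} yields $\tilde J_\eps[Q]\to J_0[Q]$.

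The step requiring most care is verifying that the test function $\varphi^j$ is legitimately continuous and bounded, so that the weak-$*$ convergence of $\mu_\eps^j$ applies; this is the only non-bookkeeping part and is handled by Lemma~\ref{lemma:fhom}. Everything else is a direct combination of~\eqref{Jtilde_eps_repr}, \eqref{Jtilde_eps_conv}, and~\eqref{hp:conv}.
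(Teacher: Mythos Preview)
Your proposal is correct and follows exactly the route the paper takes: the paper simply states that Proposition~\ref{prop:formal} is ``an immediate consequence of Lemma~\ref{lemma:Jeps} and~\eqref{f_hom}, \eqref{J0}'' together with the weak-$*$ convergence~$\mu_\eps^j\rightharpoonup^*\xi^j\,\d x$, and your argument spells out precisely this decomposition via~$\tilde{J}_\eps[Q]$. Your added care in verifying that $\varphi^j(x)=f_{hom}^j(Q(x),x)$ is a legitimate test function for~\eqref{hp:conv} (continuity and boundedness via Lemma~\ref{lemma:fhom}, then a Tietze extension) is a detail the paper leaves implicit, but it is exactly what is needed.
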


Once Proposition~\ref{prop:formal} is proved, the rest of the proof of 
Proposition~\ref{prop:liminf} and
Theorem~\ref{th:loc-min} follows exactly as in~\cite{CanevariZarnescu}.

\section{Linear terms in the homogenised bulk potential}
\label{sec:linear}

\begin{proposition}\label{prop:wedgesbasis}
There exist (possibly disconnected) 
shapes~$\mathfrak{P}_k\subset\mathbb{R}^3$, $k\in\{1,2,3,4,5,6\}$
such that taking as surface energy the Rapini-Papoular surface 
energy $f_s(Q,\nu)=\tr(Q-Q_\nu)^2$ with $Q_\nu=\nu\otimes \nu-\frac{1}{3}\Id$
where $\nu$ is the exterior unit-normal, we have:
\be \label{form:fhompotato}
f_{hom}^{\mathfrak{P}_k}(Q)= \left(\frac{2}{3}+\tr(Q^2)\right)
\sigma(\partial\mathfrak{P}_k)-2\tr(QM_k)
\ee where 
$$
M_k=\left(\frac{\pi}{3}+\frac{\pi}{2}\right)\Id-\frac{\pi}{2}e_k\otimes e_k,
\qquad k\in\{1,2,3\}
$$

$$
M_4=\left(\frac{\pi}{3}+\frac{\pi}{2}\right)\Id-\frac{\pi}{2}e_3\otimes e_3+\frac{2}{3}(e_1\otimes e_2+e_2\otimes e_1), 
$$

$$
M_5=\left(\frac{\pi}{3}+\frac{\pi}{2}\right)\Id-\frac{\pi}{2}e_2\otimes e_2+\frac{2}{3}(e_1\otimes e_3+e_3\otimes e_1), 
$$

$$
M_6=\left(\frac{\pi}{3}+\frac{\pi}{2}\right)\Id-\frac{\pi}{2}e_1\otimes e_1+\frac{2}{3}(e_2\otimes e_3+e_3\otimes e_2), 
$$
with $M_k,\, k\in\{1,2,3,4,5,6\}$ a basis in the linear space of $3\times 3$ symmetric matrices.
\end{proposition}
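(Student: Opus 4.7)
The plan is to reduce \eqref{form:fhompotato} to an algebraic problem of realising the six target matrices $M_k$ as $\int_{\partial\mathfrak{P}_k}\nu\otimes\nu\,\d\sigma$, and then to exhibit explicit (possibly disconnected) shapes built from quarter balls. First I would expand the Rapini-Papoular density: using $Q_\nu=\nu\otimes\nu-\Id/3$, $\abs{\nu}=1$, $\tr Q=0$, and the identity $(\nu\otimes\nu)^2=\nu\otimes\nu$, a direct computation gives $\tr(Q-Q_\nu)^2 = \tr(Q^2) - 2\,\nu^{\mathsf T}Q\nu + \tfrac{2}{3}$. Writing $\nu^{\mathsf T}Q\nu = \tr(Q(\nu\otimes\nu))$ and integrating over $\partial\mathfrak{P}_k$ (taking the rotation $R_*^j\equiv\Id$ so that there is no $x$-dependence), I obtain
$$
 f_{hom}^{\mathfrak{P}_k}(Q) = \sigma(\partial\mathfrak{P}_k)\bigl(\tfrac{2}{3}+\tr(Q^2)\bigr) - 2\tr\bigl(Q\,\widetilde M_k\bigr), \qquad \widetilde M_k := \int_{\partial\mathfrak{P}_k}\nu\otimes\nu\,\d\sigma.
$$
Thus the claim reduces to finding shapes with $\widetilde M_k = M_k$; since $\tr M_k = 2\pi$ in all six cases, the total surface area is automatically forced to equal $2\pi$.

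For $k\in\{4,5,6\}$ I would take $\mathfrak{P}_k$ to be a quarter of the closed unit ball, $\{x_a\ge 0,\,x_b\ge 0\}\cap\overline{B_1}$, with $(a,b)=(1,2),(1,3),(2,3)$ respectively. The boundary splits into a quarter of the unit sphere and two flat half-disks with constant outer normals $-e_a,-e_b$. A routine spherical-coordinates computation shows that the spherical part contributes $\tfrac{\pi}{3}\Id + \tfrac{2}{3}(e_a\otimes e_b + e_b\otimes e_a)$, while the two half-disks contribute $\tfrac{\pi}{2}(e_a\otimes e_a + e_b\otimes e_b) = \tfrac{\pi}{2}(\Id - e_c\otimes e_c)$, where $c$ denotes the unique element of $\{1,2,3\}\setminus\{a,b\}$; summing yields exactly $M_k$.

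The main obstacle, and the reason for the ``possibly disconnected'' clause, is realising the diagonal matrices $M_1,M_2,M_3$: the spherical cap of any single quarter ball inevitably produces an off-diagonal $e_a\otimes e_b + e_b\otimes e_a$ term that no convex piece can cancel internally. I would kill it by a reflection symmetry: for $k\in\{1,2,3\}$, take $\mathfrak{P}_k$ to be the disjoint union of two quarter balls of radius $1/\sqrt{2}$, one in $\{x_a\ge 0,\,x_b\ge 0\}$ and the other a translated copy of $\{x_a\ge 0,\,x_b\le 0\}$ (for $\{a,b\}=\{1,2,3\}\setminus\{k\}$), placed at well-separated centres so that the pieces are disjoint. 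Scaling by $1/\sqrt{2}$ halves each piece's contribution to $\int\nu\otimes\nu$; the reflection $x_b\mapsto -x_b$ flips the sign of the off-diagonal $e_a\otimes e_b + e_b\otimes e_a$ term; hence the off-diagonals cancel, the halved diagonal contributions add to $M_k$, and the total surface area is $2\pi$ as required.

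Finally, to verify the basis property I would note that $M_4-M_3,\,M_5-M_2,\,M_6-M_1$ are $\tfrac{2}{3}$ times the three standard off-diagonal symmetric matrices, which span the off-diagonal subspace; and that the diagonal matrices $M_1,M_2,M_3$, in the basis $\{e_1\otimes e_1,e_2\otimes e_2,e_3\otimes e_3\}$, form the matrix $bJ + (a-b)\Id$ where $J$ is the all-ones $3\times 3$ matrix, $a=\pi/3$, $b=5\pi/6$. Its determinant is $(a+2b)(a-b)^2 = 2\pi\cdot\pi^2/4 = \pi^3/2\neq 0$, so $M_1,M_2,M_3$ are linearly independent and, together with the three off-diagonal generators, form a basis of the six-dimensional space of symmetric $3\times 3$ matrices.
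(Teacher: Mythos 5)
Your proposal follows essentially the same route as the paper: expand the Rapini--Papoular density to reduce the claim to computing the moment matrix $\int_{\partial\mathfrak{P}_k}\nu\otimes\nu\,\d\sigma$, then realise the six target matrices using quarter balls (``potato wedges''), taking single quarter balls for $M_4,M_5,M_6$ and reflected pairs to cancel the off-diagonal spherical contribution for $M_1,M_2,M_3$. The algebraic expansion $\tr(Q-Q_\nu)^2=\tr(Q^2)-2\,\nu^{\mathsf T}Q\nu+\tfrac{2}{3}$, the flat-face and spherical-cap contributions, and the assignment of quadrant pairs are all correct and match the paper's computation. There is, however, one substantive difference worth pointing out: for $k\in\{1,2,3\}$ you scale the two disjoint pieces to radius $1/\sqrt{2}$, so that each contributes half of the unit-wedge moment and the sum equals the stated $M_k$ exactly, with total surface area $\tr M_k=2\pi$. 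The paper instead takes $\mathfrak{P}_1=\Omega_{23}^+\cup(\Omega_{23}^--(0,1,0))$ (and analogously for $k=2,3$) with two \emph{unit}-radius wedges; summing the paper's own displayed matrices for $\int_{\partial\Omega_{23}^{\pm}}\nu\otimes\nu\,\d\sigma$ gives $2M_1$, not $M_1$, and surface area $4\pi$ rather than $\tr M_1=2\pi$. Your $1/\sqrt{2}$ rescaling silently corrects this factor-of-two mismatch, making~\eqref{form:fhompotato} hold with the matrices $M_k$ literally as stated. You also prove the basis property explicitly via the diagonal/off-diagonal split and a determinant computation, whereas the paper only asserts it. So the method is the same, but your version is sharper in normalisation and more complete on the linear-independence check.
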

\begin{proof}
By formula \eqref{f_hom_j} we have:
\be 
f_{hom}^{\mathfrak{P}_k}(Q) =\int_{\partial\mathfrak{P}_k}
\left(\tr(Q^2)-2\tr(QQ_\nu)+\tr(Q_\nu)^2\right) \d\sigma
\ee
hence we readily get \eqref{form:fhompotato} with
$M_k=\int_{\partial\mathfrak{P}_k} \nu(x)\otimes \nu(x)\, \d\sigma(x)$.

Let us take for $1\le i,j\le 3$ with $i\not=j$ the {\it`potato wedges'} domains 
\begin{equation} \label{Omega+}
 \Omega_{ij}^+:= \{x=(x_1,x_2,x_3)\in\R^3\colon |x|\le 1, \  x_i\geq 0, \ x_j\ge 0\}
\end{equation}
as candidates for {\it `parts of'} our shapes $\mathfrak{P}_k$'s
(see Figure~\ref{fig:PotatoWedges}). 
\begin{figure}[t]
		\centering
		\includegraphics[height=.25\textheight]{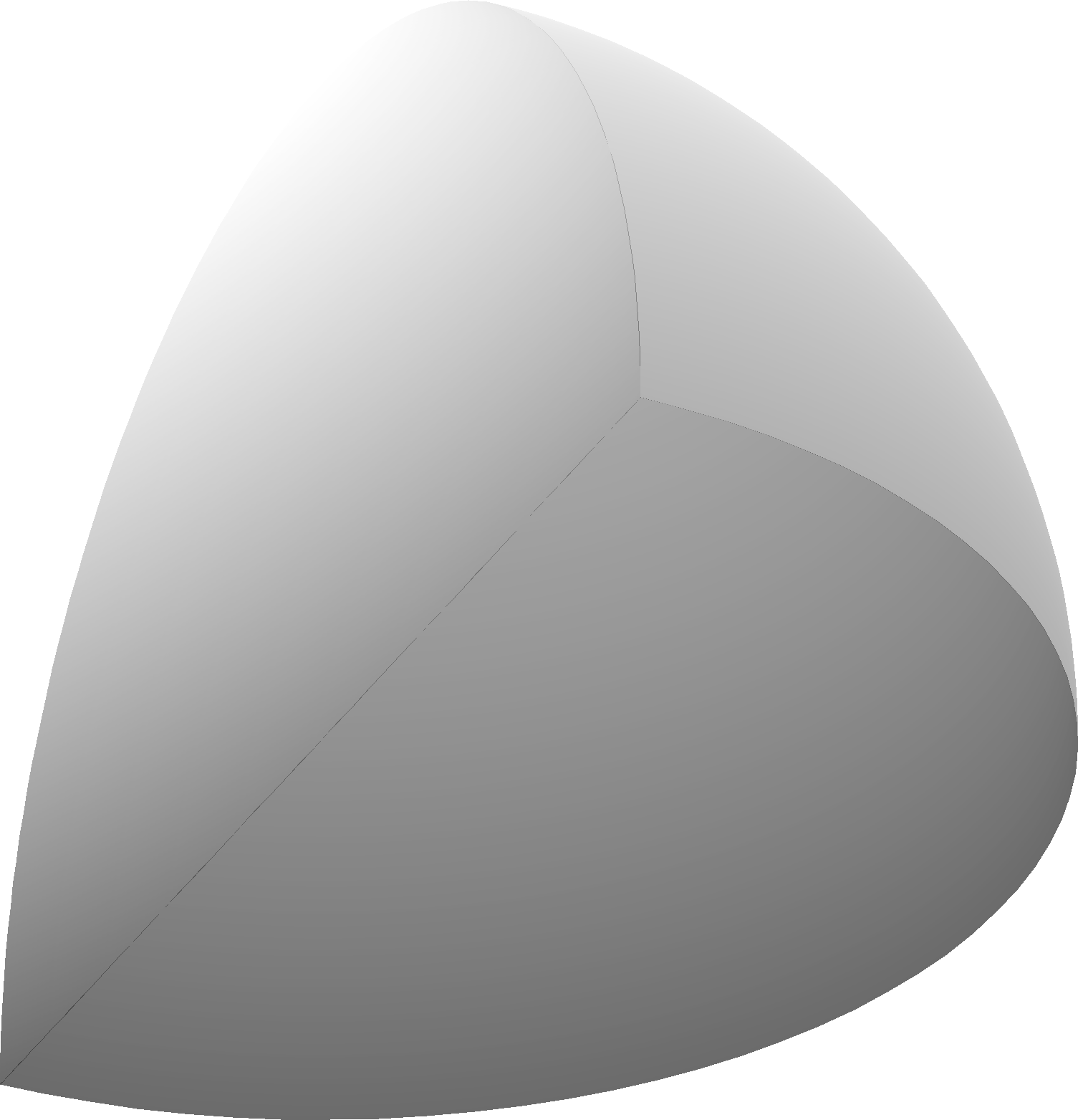}
	\caption{The `potato wedge' domain~$\Omega_{23}^+$.}
	\label{fig:PotatoWedges}
\end{figure}
We calculate the term
$$
\int_{\partial\Omega_{12}^+}\nu\otimes \nu\,\d\sigma
=\underbrace{\int_{\Omega_{12}^+\cap\{x_1=0\}}\nu\otimes \nu\,\d\sigma}_{:=\mathcal{I}_1}+\underbrace{\int_{\Omega_{12}^+\cap\{x_2=0\}}\nu\otimes \nu\,\d\sigma}_{:=\mathcal{I}_2}+\underbrace{\int_{\partial\Omega_{12}^+\cap\{x_1\cdot x_2>0\}}\nu\otimes \nu\,\d\sigma}_{:=\mathcal{I}_3}
$$
Then:
\be
\mathcal{I}_1=e_1\otimes e_1\int_{\{x_2^2+x_3^2\le 1,\, x_1=0\}} \d\sigma=\frac{\pi}{2}e_1\otimes e_1
\ee
where $e_1:=(1,0,0)$. Similarly we get $\mathcal{I}_2=\frac{\pi}{2}e_2\otimes e_2$ with $e_2:=(0,1,0)$.
Finally:
\be
(\mathcal{I}_3)_{ij}=\int_{\partial\Omega_{12}^+\cap\{x_1\cdot x_2>0\}} x_ix_j\,\d\sigma(x), 
\qquad \forall \, i, j\in\{1,2,3\}.
\ee
Because $x_1x_3$, respectively $x_2x_3$ are odd functions in the variable $x_3$ along the domain $\Omega_{12}^+\cap\{x_1\cdot x_2>0\}$ we have that $(\mathcal{I}_3)_{13}=(\mathcal{I}_3)_{31}=(\mathcal{I}_3)_{23}=(\mathcal{I}_3)_{32}=0$.
Furthermore:
\bea
(\mathcal{I}_3)_{12}=(\mathcal{I}_3)_{21}
&=\int_{\partial\Omega_{12}^+\cap\{x_1\cdot x_2>0\}} x_1x_2\,\d\sigma_x
=\int_0^\pi\left(\int_0^{\frac{\pi}{2}} \sin^3\theta\cos\varphi\sin\varphi\,\,\d\varphi\right)\d\theta\non\\
&=\int_0^\pi\sin^3\theta\,\d\theta\cdot\int_0^{\frac{\pi}{2}}\cos\varphi\sin\varphi\,\,\d\varphi
=\frac{4}{3}\cdot\frac{1}{2}=\frac{2}{3}
\eea
Similarly we get: $(\mathcal{I}_3)_{11}=(\mathcal{I}_3)_{22}=(\mathcal{I}_3)_{33}=\frac{\pi}{3}$. Summarizing the last calculations, we get:
\be
\int_{\partial\Omega_{12}^+}\nu\otimes \nu\,\d\sigma
=\left(\begin{array}{lll}\frac{\pi}{3}+\frac{\pi}{2} & \frac 23 & 0\\
\frac 23 & \frac{\pi}{3}+\frac{\pi}{2} & 0\\ 0 & 0 & \frac{\pi}{3}\end{array}\right)
\ee 
Analogous calculations provide
\be
\int_{\partial\Omega_{13}^+} \nu\otimes \nu\,\d\sigma
=\left(\begin{array}{lll}\frac{\pi}{3}+\frac{\pi}{2} & 0 & \frac{2}{3} \\0 &\frac{\pi}{3} & 0\\ \frac 23 & 0 &\frac{\pi}{3}+\frac{\pi}{2}  \end{array}\right)
\ee
\be
\int_{\partial\Omega_{23}^+}\nu\otimes \nu\,\d\sigma
=\left(\begin{array}{lll}\frac{\pi}{3} & 0 & 0\\
0 & \frac{\pi}{3}+\frac{\pi}{2} & \frac 23 \\ 0 & \frac 23 & \frac{\pi}{3}+\frac{\pi}{2}\end{array}\right)
\ee
Similarly we define, for $1\le i<j\le 3$,
\begin{equation} \label{Omega-}
 \Omega_{ij}^- := \{x=(x_1,x_2,x_3)\in\R^3
 \colon |x|\le 1, \ x_i\le 0, \ x_j\ge 0\}
\end{equation}
and we have:
\be
\int_{\partial\Omega_{12}^-}\nu\otimes\nu\, \d\sigma
=\left(\begin{array}{lll}\frac{\pi}{3}+\frac{\pi}{2} & -\frac 23 & 0\\
-\frac 23 & \frac{\pi}{3}+\frac{\pi}{2} & 0\\ 0 & 0 & \frac{\pi}{3}\end{array}\right)
\ee
respectively
\be
\int_{\partial\Omega_{13}^-} \nu\otimes\nu\, \d\sigma
=\left(\begin{array}{lll}\frac{\pi}{3}+\frac{\pi}{2} & 0 & -\frac{2}{3} \\0 &\frac{\pi}{3} & 0\\ -\frac 23 & 0 &\frac{\pi}{3}+\frac{\pi}{2}  \end{array}\right)
\ee
\be
\int_{\partial\Omega_{23}^-} \nu\otimes\nu \, \d\sigma
=\left(\begin{array}{lll}\frac{\pi}{3} & 0 & 0\\
0 & \frac{\pi}{3}+\frac{\pi}{2} & -\frac 23 \\ 0 & -\frac 23 & \frac{\pi}{3}+\frac{\pi}{2}\end{array}\right)
\ee
We take then:
\begin{gather*}
\mathfrak{P}_1:=\Omega_{23}^+\cup\left(\Omega_{23}^- - (0, \, 1, \, 0)\right), \quad 
\mathfrak{P}_2:=\Omega_{13}^+\cup\left(\Omega_{13}^- - (1, \, 0, \, 0)\right), \\
\mathfrak{P}_3:=\Omega_{12}^+\cup\left(\Omega_{12}^- - (1, \, 0, \, 0)\right)
\end{gather*}
and, respectively
\[
\mathfrak{P}_4:=\Omega_{12}^+, \qquad
\mathfrak{P}_5:=\Omega_{13}^+, \qquad
\mathfrak{P}_6:=\Omega_{23}^+.
\qedhere
\]
\end{proof}

\bigskip
\begin{proposition}\label{prop:homlinear}
Let $P$ be a $3\times 3$ symmetric matrix, not necessarily traceless, and $W\in\R$.
There exists a  family of $J_P\in\N$ shapes~$\mcP^j$ and corresponding surface energy 
strengths $i_j,j\in\{1,\dots, J_P\}$ such that,
taking for each shape the Rapini-Papoular surface energy
with corresponding intensity~$i_j$, i.e.
\begin{equation} \label{choice-of-fs}
 f_{s}^j(Q, \,\nu)=W\,i_j\,\tr(Q-Q_\nu)^2
\end{equation}
with $Q_\nu := \nu\otimes \nu-\frac{1}{3}\Id$ 
and~$\nu$ is the exterior unit-normal,
the corresponding homogenised potential is:
\be \label{form:homlin-bis}
 f_{hom}^P(Q) = - W\alpha_P \left(\frac{1}{3}+\frac{1}{2}\tr(Q^2)\right) + W\tr(QP)
\ee 
with $\alpha_P\in\R$ explicitly computable in terms of
the shapes volumes and the surface energy strengths.
\end{proposition}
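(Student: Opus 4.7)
The plan is to invoke Proposition~\ref{prop:wedgesbasis} directly and reduce the construction to a $6\times 6$ linear system in the unknown intensities~$i_j$. Take $J_P = 6$, with $\mcP^j := \mathfrak{P}_j$ being the six wedge-type shapes already produced in Proposition~\ref{prop:wedgesbasis}, and assign to each population the Rapini-Papoular density prescribed in~\eqref{choice-of-fs}. Since the surface energy density $f_s^j$ depends linearly on the intensity $Wi_j$, and since the map $f_s \mapsto f_{hom}^j$ defined by~\eqref{f_hom_j} is linear in $f_s$, the homogenised contribution of population~$j$ is
\[
 f_{hom}^{j}(Q) = W i_j \left[\left(\tfrac{2}{3}+\tr(Q^2)\right)\sigma(\partial\mathfrak{P}_j) - 2\tr(Q M_j)\right],
\]
where the bracketed quantity is exactly the right-hand side of~\eqref{form:fhompotato}.

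Summing over $j\in\{1,\dots,6\}$ (which corresponds to taking $\xi^j \equiv 1$ in~\eqref{f_hom}) yields
\[
 f_{hom}^P(Q) = W\left(\tfrac{2}{3}+\tr(Q^2)\right)\sum_{j=1}^{6} i_j \sigma(\partial\mathfrak{P}_j)
 - 2W\,\tr\!\left(Q \sum_{j=1}^{6} i_j M_j\right).
\]
Comparing this with the target expression~\eqref{form:homlin-bis}, rewritten as
$-\frac{W\alpha_P}{2}\bigl(\frac{2}{3}+\tr(Q^2)\bigr) + W\tr(QP)$, the identification of the linear-in-$Q$ term forces
\[
 \sum_{j=1}^{6} i_j M_j = -\tfrac{1}{2} P,
\]
while the constant-plus-quadratic term prescribes $\alpha_P := -2\sum_{j} i_j \sigma(\partial\mathfrak{P}_j)$.

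The key point is that Proposition~\ref{prop:wedgesbasis} asserts that $\{M_1,\dots,M_6\}$ is a basis of the $6$-dimensional space of real symmetric $3\times 3$ matrices. Since $P$ is symmetric but otherwise arbitrary (in particular, not required to be traceless), the linear system above admits a unique solution $(i_1,\dots,i_6)\in\R^6$, from which $\alpha_P$ is explicitly determined in terms of the surface areas $\sigma(\partial\mathfrak{P}_j)$ and the components of $P$ in the basis $\{M_j\}$. This completes the proof.

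The main conceptual obstacle is already absorbed in Proposition~\ref{prop:wedgesbasis}: once the basis property of the $M_k$'s is in hand, the remaining argument is purely linear algebra and bookkeeping. The only subtle point worth emphasising is that the intensities $i_j$ produced by inverting the system need not be positive, which is precisely why the generalised surface-energy framework of~\eqref{hp:fs} (allowing densities not bounded from below) is needed for the construction to fall within the scope of Theorem~\ref{th:loc-min}.
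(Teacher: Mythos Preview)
Your approach is essentially the paper's: invoke the basis property of the $M_k$ from Proposition~\ref{prop:wedgesbasis} and solve a linear system for the intensities. One point you gloss over is that $\mathfrak{P}_1,\mathfrak{P}_2,\mathfrak{P}_3$ are \emph{disconnected} (each is a union of two translated wedge pieces), so setting $\mcP^j:=\mathfrak{P}_j$ violates the convexity hypothesis~\eqref{hp:P}. The paper therefore takes the $\mcP^j$ to be the \emph{connected components} of the~$\mathfrak{P}_k$'s (so $J_P=9$ rather than~$6$), assigning the same intensity to all components of a given~$\mathfrak{P}_k$; this leaves the computation of $f_{hom}^P$ unchanged but is precisely what makes the construction admissible for Theorem~\ref{th:loc-min}, which you yourself invoke in your closing paragraph.
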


\begin{proof}
We take $M_k$, $k\in\{1,\dots,6\}$, as provided in Proposition~\ref{prop:wedgesbasis},
to be  a linear basis in the spaces of $3\times 3$ symmetric matrices.
Then there exists $a_k:=P\cdot M_k$, $k\in\{1,\dots,6\}$ such that 
$$
P=\sum_{k=1}^6 a_kM_k
$$
Let~$\mcP^1, \, \ldots, \, \mcP^{J_P}$ be the connected components 
of~$\mathfrak{P}^1, \, \ldots, \, \mathfrak{P}^k$.
Each~$\mcP^j$ is a compact, convex set of the 
form~\eqref{Omega+} or~\eqref{Omega-} (see Figure~\ref{fig:PotatoWedges}). 
For~$j\in\{1, \, \ldots, \, J_P\}$, we 
define the corresponding intensities as $i_j=-\frac{1}{2}a_k$
where~$k = k(j)$ is such that~$\mcP^j\subseteq\mathfrak{P}^k$.
Then, noting that the homogenised potentials 
corresponding to each species will add together to provide the
homogenised porential corresponding to all the species, we get:
\be\label{form:homlin}
f_{hom}^P(Q) = -\frac{1}{2}\sum_{k=1}^7 a_k f_{hom}^{\mathfrak{P}^k}(Q)
=-W\left(\frac{1}{3}+\frac{1}{2}\tr(Q^2)\right)
\sum_{k=1}^6i_k\sigma(\partial\mathfrak{P}_k) + W\tr(QP)
\ee hence we obtain the claimed~\eqref{form:homlin-bis} with
$\alpha_P:=\sum_{k=1}^6i_k\sigma(\partial\mathfrak{P}_k)$.
\end{proof}

\begin{remark}\label{rmk:bulkconst}
We can, without loss of generality, drop the constant term in a bulk potential, since adding a constant to an energy functional does not change the minimiser. In particular in $f_{hom}^P$ we can ignore the term $-\frac{W}{3}\alpha_P$.
\end{remark}

We wish now to choose the surface energy densities~$f_s^j$ 
of Rapini-Papoular type and the shapes of the colloidal particles, 
in such a way that given the symmetric $3\times 3$ matrix~$P$ and~$W\in\R$,
local minimisers of the Landau-de Gennes functional
\begin{equation} \label{Feps-Ldg+}
 \begin{split}
 \mcF_\eps[Q] &= \int_{\Omega_\eps} \left(f_e^{LdG}(\nabla Q) 
 + a\,\tr(Q^2) - b\, \tr(Q^3) + c\left(\tr(Q^2)\right)^2 \right)\d x \\
 &\qquad\qquad\qquad + \eps^{3-2\alpha} 
 \sum_{j=1}^{J} \int_{\partial\mcP_\eps^j} f_s^j(Q, \, \nu) \, \d\sigma
 \end{split}
\end{equation}
(with~$f_e^{LdG}$ given by~\eqref{LdG-fe})
converge to local minimisers of the homogenised functional
\begin{equation} \label{Fzero-LdG+}
 \mcF_0[Q] = \int_\Omega \left(f_e^{LdG}(\nabla Q) 
 + a^\prime \, \tr(Q^2) - b \, \tr(Q^3) +
 c \left(\tr(Q^2)\right)^2 +W\tr(PQ) \right)\d x.
\end{equation}
We will assume that~$1 < \alpha < 3/2$
and the centres of the inclusions, $x^{i,j}_\eps$, satisfy~\eqref{hp:Omega_eps}
and that they are uniformly distributed, i.e. they satify~\eqref{hp:conv} with~$\xi^j = 1$.
We also assume that all inclusions of the same family are parallel to each other, 
that is, we take~$R^{i,j}_\eps= \mathrm{Id}$ for any~$i$, $j$, $\eps$
(in particular, \eqref{hp:R} is satisfied with~$R_*^j = \mathrm{Id}$).

\begin{remark} One could also choose colloidal particles and corresponding surface energies that modify the $b$ and $c$ coefficients, but for this it would not suffice to use Rapini-Papoular type of surface energies (see for instance Section~$2.2$ in~\cite{CanevariZarnescu}).
\end{remark}

\begin{corollary} \label{th:conv-LdGlinear}
 Let~$(a, \, b, \, c)\in\R^3$  with~$c>0$. Let~$a^\prime\in\R$, 
 $W>0$, and let~$P$ be a symmetric, $3\times 3$ matrix.
Suppose that the inequalities~\eqref{Longa}
 are satisfied.
 Then, there exists a family of shapes~$\mcP^j$ 
 and a corresponding surface energy~$f_s^j$ for each of them,
 such that for any isolated local minimiser~$Q_0$ of the functional~$\mcF_0$
 defined by~\eqref{Fzero-LdG+}, and for~$\eps>0$ small enough,
 there exists a local minimiser~$Q_\eps$ of the functional~$\mcF_\eps$,
 defined by~\eqref{Feps-Ldg+}, such that $E_\eps Q_\eps\to Q_0$
 strongly in~$H^1(\Omega, \, \mcS_0)$.
\end{corollary}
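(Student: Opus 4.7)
The plan is to combine Proposition~\ref{prop:homlinear} with Theorem~\ref{th:loc-min}: the former realises the linear perturbation $W\tr(QP)$ in the homogenised potential, up to a spurious $\tr(Q^2)$-contribution and an inessential constant, while the latter transfers the convergence from the level of homogenised potentials to the level of local minimisers. To match exactly the prescribed $\tr(Q^2)$-coefficient $a'$ in~\eqref{Fzero-LdG+}, I would add one further population of spherical inclusions whose sole purpose is to tune this coefficient.

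More concretely, first I would apply Proposition~\ref{prop:homlinear} with the given $P$ and $W>0$ to produce a finite family of shapes $\mcP^1,\dots,\mcP^{J_P}$ and Rapini-Papoular surface densities delivering the homogenised contribution
\[
 f_{hom}^P(Q) = -\frac{W\alpha_P}{2}\tr(Q^2) + W\tr(QP) - \frac{W\alpha_P}{3}.
\]
Then I would augment the family with an additional population $\mcP^{J_P+1}$ equal to the closed unit ball, equipped with the Rapini-Papoular surface energy $f_s^{J_P+1}(Q,\nu) = \widetilde{W}\tr(Q-Q_\nu)^2$ of intensity $\widetilde{W}\in\R$ to be chosen. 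A direct computation gives $\int_{\partial\mcP^{J_P+1}}\nu\otimes\nu\,\d\sigma = \frac{4\pi}{3}\Id$; since $Q$ is traceless the linear term $\tr(Q\cdot \frac{4\pi}{3}\Id)$ vanishes, and the sphere contributes only
\[
 f_{hom}^{\mcP^{J_P+1}}(Q) = 4\pi\widetilde{W}\left(\tr(Q^2) + \frac{2}{3}\right).
\]
Choosing $\widetilde{W} := \frac{1}{4\pi}\!\left(a'-a+\frac{W\alpha_P}{2}\right)$ makes the sum of the original Landau-de Gennes bulk and the total homogenised potential $f_{hom}^P + f_{hom}^{\mcP^{J_P+1}}$ coincide with the bulk of~$\mcF_0$ in~\eqref{Fzero-LdG+} up to an additive constant, which is irrelevant by Remark~\ref{rmk:bulkconst}.

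It then remains to verify hypotheses~\eqref{hp:first}--\eqref{hp:last} for the augmented family: the wedges of Proposition~\ref{prop:wedgesbasis} and the ball, translated by an inconsequential amount so that the origin lies in their interior (a translation that changes neither the surface area nor the integrals $\int\nu\otimes\nu\,\d\sigma$), are compact and convex as required by~\eqref{hp:P}; the Landau-de Gennes elastic density is strongly convex under~\eqref{Longa}, so~\eqref{hp:fe} holds; the quartic bulk $f_b^{LdG}$, shifted by a suitable constant, is non-negative with sixth-order growth control, matching~\eqref{hp:fb}; and every Rapini-Papoular density appearing in the construction is a polynomial of degree two in $Q$, hence satisfies the cubic local Lipschitz estimate~\eqref{hp:fs} regardless of the sign of the intensity. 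With the hypotheses in place, Theorem~\ref{th:loc-min} applied to the isolated $H^1$-local minimiser $Q_0$ of $\mcF_0$ produces, for every sufficiently small $\eps>0$, a local minimiser $Q_\eps$ of $\mcF_\eps$ such that $E_\eps Q_\eps\to Q_0$ strongly in $H^1(\Omega,\mcS_0)$. The only subtle point is ensuring that the spherical population adds nothing to the linear-in-$Q$ part of the homogenised potential, which is precisely where the trace-freeness of the order parameter intervenes; once this is observed, the statement follows as a direct combination of Propositions~\ref{prop:wedgesbasis}--\ref{prop:homlinear} with Theorem~\ref{th:loc-min}.
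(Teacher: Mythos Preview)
Your argument is correct and follows essentially the same route as the paper: invoke Proposition~\ref{prop:homlinear} for the potato-wedge shapes to realise the linear term $W\tr(QP)$ plus a spurious $\tr(Q^2)$-contribution, add one extra spherical population to tune the $\tr(Q^2)$-coefficient to~$a'$, verify \eqref{hp:P}--\eqref{hp:fs}, and conclude via Theorem~\ref{th:loc-min}. The only minor variation is that for the sphere you use a Rapini--Papoular density $\widetilde{W}\tr(Q-Q_\nu)^2$ (which requires observing that the linear-in-$Q$ part vanishes by trace-freeness), whereas the paper takes $f_s^{J_P+1}(Q,\nu)=\mathrm{const}\cdot(\nu\cdot Q^2\nu)$ directly; both choices produce the same $\tr(Q^2)$-correction up to an irrelevant additive constant.
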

\begin{proof}
 This statement is a particular case of our main result,
 Theorem~\ref{th:loc-min}. 
 If~\eqref{Longa} holds and~$c>0$, $c^\prime>0$, then the 
 conditions~\eqref{hp:fe}--\eqref{hp:fb} are satisfied. 
 
 We take $J_P$ species $\mcP^j,\,j\in\{1,\dots,J_P\}$
 and surface energies given by~\eqref{choice-of-fs}, as in 
 Proposition~\ref{prop:wedgesbasis}. Each $\mcP_j$
 is a compact, convex set of the form~\eqref{Omega+} or~\eqref{Omega-},
 so~\eqref{hp:P} is satisfied (up to translations) 
 and~\eqref{hp:fs} is satisfied too.
 The homogenised potential corresponding to these is:
 \be 
  f_{hom}^P(Q) = - W \alpha_P\left(\frac{1}{3}+\frac{1}{2}\tr(Q^2)\right)+W\tr(QP)
 \ee
 where $\alpha_P:=\sum_{k=1}^6\frac{P\cdot M_k}{2}\sigma(\partial\mathfrak{P}_k)$
 (and the $M_k, k\in\{1,\dots,6\}$ are those from Proposition~\ref{prop:wedgesbasis}).
 We further take one more species, of spherical colloids~$\mcP^{J_P+1} := B_1$,
 and define the surface energy density
 \begin{equation} \label{fs-LdG}
  \begin{split}
   f_s^{J_P+1}(Q, \, \nu) &:= \frac{1}{4\pi}\left(a^\prime +\frac{W\alpha_P}{2}- a\right) (\nu\cdot Q^2\nu).
  \end{split}
 \end{equation}
 This produces (see also for instance Remark $2.9$ in \cite{CanevariZarnescu}) 
 a homogenised potential 
 $$
 f_{hom}^{sph}(Q) := \left(a^\prime +\frac{W\alpha_P}{2}- a\right)\tr(Q^2)
 $$
 Then the homogenised potential for all the $J_P+1$ species is
 \[
  \begin{split}
  f_{hom}(Q) &= f_{hom}^{sph}(Q) + f_{hom}^P(Q) \\
  &=(a^\prime - a)\,\tr(Q^2) +b\,\tr(Q^3)
  + c\,(\tr(Q^2))^2+W\tr(PQ)-\frac{W}{3}\alpha_P
  \end{split}
 \]
 and, since we can, without loss of generality, see Remark~\ref{rmk:bulkconst}
 drop the constant term $-\frac{W}{3}\alpha_P$, the corollary follows from Theorem~\ref{th:loc-min}.
\end{proof}

\section{The limit functional in the case of stronger anchoring strength}
\label{sect:gamma}

The purpose of this section is to study the asymptotic behaviour,
as~$\eps\to 0$, of minimisers of a functional with a different choice of the scaling
for the surface anchoring strength.
We consider the free energy functional:
\begin{equation} \label{Eq:LdG+}
 \mcF_{\eps,\gamma}[Q] 
 := \int_{\Omega_\eps} \left(f_e(\nabla Q) + f_b(Q)\right)\d x +
   \eps^{3-2\alpha-\gamma} \sum_{j=1}^{J} \int_{\partial\mcP_\eps^j} 
   f_s^j(Q, \, \nu) \, \d\sigma.
\end{equation} (where $\nu(x)$ denotes as usually 
the exterior normal at the point $x$ on the boundary), 
with $\alpha\in (1,\frac{3}{2})$ and
\begin{enumerate}[label=(K\textsubscript{\arabic*}), ref=K\textsubscript{\arabic*}]
 \item \label{hpgamma:gamma} \label{hpgamma:first} $0 < \gamma < 1/4$.
\end{enumerate}
Due to the extra factor~$\eps^{-\gamma}$ in front of the surface integral,
we \emph{cannot} apply Proposition~\ref{prop:lsc} to obtain the 
lower semi-continuity of~$\mcF_{\eps,\gamma}$ for fixed~$\eps$. 
Therefore, in contrast with the previous sections,
we assume boundedness from below on the surface term. 
\begin{enumerate}[label=(K\textsubscript{\arabic*}), ref=K\textsubscript{\arabic*}, resume]
 \item \label{hpgamma:fs} $f_s\geq 0$.
\end{enumerate}
\begin{remark} \label{remark:existence}
 Under the assumption~\eqref{hpgamma:fs}, the sequential weak
 lower semi-continuity of~$\mcF_\eps$ (for fixed~$\eps$)
 follows from the compact 
 embedding~$H^{1/2}(\partial\Omega_\eps)\hookrightarrow L^2(\partial\Omega_\eps)$ù
 and Fatou's lemma. Therefore, a routine application of the
 direct method of the Calculus of Variations shows that
 minimisers of~$\mcF_\eps$ exist, for any~$\eps>0$.
\end{remark}
As a consequence of~\eqref{hpgamma:fs} and of~\eqref{hp:conv},
the function~$f_{hom}$ is non-negative, too.
In fact, we will also assume that
\begin{enumerate}[label=(K\textsubscript{\arabic*}), ref=K\textsubscript{\arabic*}, resume]
 \item \label{hpgamma:fhom} $\inf\{f_{hom}(Q, \, x) \colon 
 Q\in\mcS_0\} = 0$ for any~$x\in\overline{\Omega}$.
\end{enumerate}
Recall that, for any~$j\in\{1, \, \ldots, \, J\}$,
the measures $\mu_\eps^j := \eps^{-3}\sum_i\delta_{x^{i,j}_\eps}$
are supposed to converge weakly$^*$ to a non-negative
function~$\xi^j\in L^\infty(\Omega)$.
We need to prescribe a rate of convergence. We express the rate of 
convergence in terms of the~$W^{-1,1}$-norm (that is, the dual
Lipschitz norm, also known as flat norm in some contexts):
\begin{equation} \label{flat}
 \begin{split}
  \mathbb{F}_\eps := \max_{j=1, \, 2, \, \ldots, \, J}
   &\sup \bigg\{ \int_{\Omega} \varphi\,\d\mu_\eps^j 
    - \int_{\Omega} \varphi\,\xi^j\,\d x\colon \\
   &\qquad\qquad \varphi\in W^{1,\infty}(\Omega), \quad 
    \norm{\nabla\varphi}_{L^\infty(\Omega)} + 
    \norm{\varphi}_{L^\infty(\Omega)} \leq 1 \bigg\} .
 \end{split}
\end{equation}
\begin{enumerate}[label=(K\textsubscript{\arabic*}), ref=K\textsubscript{\arabic*}, resume]
 \item \label{hpgamma:flat}
 There exists a constant~$\lambda_{\mathrm{flat}}>0$
 such that $\mathbb{F}_\eps \leq \lambda_{\mathrm{flat}}\eps$ for any~$\eps$.
\end{enumerate}

\begin{remark} \label{remark:flat}
 The assumption~\eqref{hpgamma:flat} is satisfied if the inclusions 
 are periodically distributed. Consider, for simplicity, the case~$J=1$,
 and suppose that the centres of the inclusions, $x_\eps^i$,
 are exactly the points~$y\in (\eps\mathbb{Z})^3$ such that
 $y + [-\eps/2, \, \eps/2]^3\subseteq\Omega$.
 Let~$\Omega_\eps := \cup_i (x_\eps^i + [-\eps/2, \, \eps/2]^3)$.
 Then, for any~$\varphi\in W^{1, \infty}(\Omega)$, we have
 \[
  \begin{split}
   \abs{\int_{\Omega} \varphi \,\d\mu_\eps - \int_{\Omega} \varphi \,\d x}
   &\leq \sum_{i = 1}^{N_\eps}\int_{x_\eps^{i} + [-\eps/2, \, \eps/2]^3} 
    \abs{\varphi - \varphi(x_\eps^{i})}
    + \int_{\Omega\setminus\Omega_\eps} \abs{\varphi} \\
   &\leq \frac{\sqrt{3}\,\eps}{2} \norm{\nabla\varphi}_{L^\infty(\Omega)} \abs{\Omega_\eps}
   + \norm{\varphi}_{L^\infty(\Omega)} \abs{\Omega\setminus\Omega_\eps}.
  \end{split}
 \]
 Moreover, $\abs{\Omega\setminus\Omega_\eps}\lesssim\eps$,
 because~$\Omega\setminus\Omega_\eps\subseteq
 \{y\in\Omega\colon \dist(y, \, \partial\Omega)\leq \sqrt{3}\,\eps\}$. 
 Therefore, \eqref{hpgamma:flat} holds.
\end{remark}

Finally, we assume some regularity on the boundary datum~$g\colon\partial\Omega\to\mcS_0$.
\begin{enumerate}[label=(K\textsubscript{\arabic*}), ref=K\textsubscript{\arabic*}, resume]
 \item \label{hpgamma:g} \label{hpgamma:last}
 $g$ is bounded and Lipschitz.
\end{enumerate}

\paragraph{$\Gamma$-convergence to a constrained problem.}

We can now define the candidate limit functional. Let
\begin{equation} \label{A}
 \mcA := \left\{Q\in H^1_g(\Omega, \, \mcS_0)\colon
 f_{hom}(x, \, Q(x)) = 0 \ \textrm{ for a.e. } x\in\Omega\right\} \! ,
\end{equation}
and $\widetilde{\mcF}\colon L^2(\Omega, \, \mcS_0)\to (-\infty, \, +\infty]$,
\[
 \widetilde{\mcF}(Q) := \begin{cases}
                        \displaystyle\int_{\Omega} 
                          \left(f_e(\nabla Q) + f_b(Q)\right)\d x 
                        & \textrm{if } Q\in\mcA \\
                          +\infty & \textrm{otherwise.}
                       \end{cases}
\]

\begin{theorem} \label{th:gamma}
  Suppose that the assumptions~\eqref{hp:first}--\eqref{hp:last},
  \eqref{hpgamma:first}--\eqref{hpgamma:last} are satisfied.
  Then, the following statements hold.
  \begin{enumerate}[label=(\roman*)]
   \item Given a family of maps~$Q_\eps\in H^1_g(\Omega_\eps, \, \mcS_0)$
   such that $\sup_\eps\mcF_{\eps,\gamma}(Q)<+\infty$,
   there exists a non-relabelled sequence and~$Q_0\in\mcA$
   such that $E_\eps Q_\eps \rightharpoonup Q_0$ weakly in~$H^1(\Omega)$,
   \[
    \widetilde{\mcF}(Q_0) \leq \liminf_{\eps\to 0} \mcF_{\eps,\gamma}(Q_\eps).
   \]
   \item For any $Q_0\in\mcA$, there exists a sequence of
   maps~$Q_\eps\in H^1_g(\Omega_\eps, \, \mcS_0)$
   such that $E_\eps Q_\eps\to Q_0$ strongly in~$H^1(\Omega)$ and
   \[
    \limsup_{\eps\to 0} \mcF_{\eps,\gamma}(Q_\eps) \leq \widetilde{\mcF}(Q_0).
   \]
  \end{enumerate}
\end{theorem}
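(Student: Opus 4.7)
The plan follows the standard $\Gamma$-convergence blueprint, with part~(i) a short corollary of Proposition~\ref{prop:liminf} and part~(ii) requiring a careful mollification in which the assumption $\gamma<1/4$ plays a decisive role. For part~(i), the coercivity $f_e(D)\geq \lambda_e^{-1}|D|^2$ together with the uniform energy bound produces a uniform $L^2$ bound on $\nabla Q_\eps$ over $\Omega_\eps$; Lemma~\ref{lem:extension}(i) combined with the Poincar\'e inequality (using $E_\eps Q_\eps=g$ on $\partial\Omega$) then yields a subsequence with $E_\eps Q_\eps\rightharpoonup Q_0$ weakly in $H^1(\Omega,\mcS_0)$. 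The enhanced anchoring factor gives
\[
 \eps^{3-2\alpha}\sum_{j=1}^J \int_{\partial\mcP_\eps^j} f_s^j(Q_\eps,\nu)\,\d\sigma \;\leq\; \eps^\gamma\,\mcF_{\eps,\gamma}(Q_\eps) \;\lesssim\; \eps^\gamma \;\longrightarrow\; 0,
\]
and by \eqref{lim_jump} this limit equals $\int_\Omega f_{hom}(Q_0,x)\,\d x$; since $f_{hom}\geq 0$ by \eqref{hpgamma:fs} and \eqref{hp:conv}, we deduce $f_{hom}(Q_0,x)=0$ a.e., so $Q_0\in\mcA$. The liminf inequality then follows by discarding the non-negative surface term and invoking \eqref{liminf_volume}.

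For part~(ii), given $Q_0\in\mcA$ the naive candidate $Q_0|_{\Omega_\eps}$ fails because $Q_0$ is only~$H^1$, whereas Lemma~\ref{lemma:Jeps} (our only bridge from $J_\eps$ to $J_0$) demands Lipschitz control. The fix is to mollify at a scale $\delta=\delta(\eps)\to 0$ tied to~$\eps$: using~\eqref{hpgamma:g} I extend $g$ to a Lipschitz map $\bar g$ on~$\R^3$, write $Q_0=\bar g+v$ with $v\in H^1_0(\Omega,\mcS_0)$, and set $Q_0^\delta:=\bar g+\eta_\delta\,(\rho_\delta\ast\bar v)$, where $\bar v$ is the trivial extension of $v$ and $\eta_\delta$ is a cutoff of width~$\delta$ near~$\partial\Omega$. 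Then $Q_0^\delta\in H^1_g(\Omega,\mcS_0)$, $Q_0^\delta\to Q_0$ strongly in~$H^1$, and Young's inequality combined with $H^1(\R^3)\hookrightarrow L^6(\R^3)$ yields $\|Q_0^\delta\|_{L^\infty}\lesssim \delta^{-1/2}$ and $\|\nabla Q_0^\delta\|_{L^\infty}\lesssim \delta^{-3/2}$. Taking $Q_\eps:=Q_0^{\delta(\eps)}|_{\Omega_\eps}$, Lemma~\ref{lem:extension}(ii) gives $E_\eps Q_\eps\to Q_0$ strongly in~$H^1$, while the volume integrals $\int_{\Omega_\eps}(f_e(\nabla Q_\eps)+f_b(Q_\eps))\,\d x$ converge to $\int_\Omega(f_e(\nabla Q_0)+f_b(Q_0))\,\d x$ by strong $L^6$ convergence and equi-integrability on the vanishing set~$\mcP_\eps$.

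The delicate step, and the place where the threshold $\gamma<1/4$ is actually used, is the estimate $\eps^{-\gamma}J_\eps[Q_0^\delta]\to 0$. Lemma~\ref{lemma:Jeps} gives $|J_\eps[Q_0^\delta]-\tilde J_\eps[Q_0^\delta]|\lesssim \eps^\alpha\delta^{-3}$; applying \eqref{hpgamma:flat} to the test function $\varphi^j(x):=f_{hom}^j(Q_0^\delta(x),x)$, whose $W^{1,\infty}$-norm is $\lesssim\delta^{-3}$ by \eqref{fhomj-Lipschitz} and the chain rule, gives $|\tilde J_\eps[Q_0^\delta]-J_0[Q_0^\delta]|\lesssim \eps\,\delta^{-3}$; and \eqref{fhom-Lipschitz} together with the standard bound $\|Q_0^\delta-Q_0\|_{L^2}\lesssim \delta$ yields $J_0[Q_0^\delta]\lesssim \delta$ (since $J_0[Q_0]=0$). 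Setting $\delta=\eps^\beta$,
\[
 \eps^{-\gamma}J_\eps[Q_0^\delta]\;\lesssim\;\eps^{\alpha-\gamma-3\beta}+\eps^{1-\gamma-3\beta}+\eps^{\beta-\gamma},
\]
and all three terms vanish provided $\gamma<\beta<(1-\gamma)/3$, a non-empty window precisely when $\gamma<1/4$. The main obstacle is thus the three-way balance between the inclusion scale~$\eps^\alpha$, the mollification~$\delta$, and the anchoring enhancement~$\eps^{-\gamma}$: the bottleneck $\beta<(1-\gamma)/3$ originates from the flat-norm term and is what dictates the sharp threshold recorded in~\eqref{hpgamma:gamma}; improving it would require either sharpening the Lipschitz/flat-norm estimates or finding a recovery sequence that genuinely takes values in~$\mcA$.
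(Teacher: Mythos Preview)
Your proof is correct and follows essentially the same route as the paper: part~(i) is deduced from Proposition~\ref{prop:liminf} exactly as you do, and for part~(ii) the paper constructs the same mollified recovery sequence (Lemma~\ref{lemma:conv-bdd}), decomposes $J_\eps[Q_0^\delta]-J_0[Q_0]$ into the same three pieces via Lemma~\ref{lemma:Jeps}, the flat-norm hypothesis~\eqref{hpgamma:flat}, and~\eqref{fhom-Lipschitz}, and balances them at $\beta=1/4$ (Lemma~\ref{lemma:recovery}). The only cosmetic difference is that the paper packages the mollification estimates and the volume-term convergence into separate lemmas (Lemmas~\ref{lemma:convolution}--\ref{lemma:lim_volume}), whereas you sketch them inline.
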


\begin{remark} The theorem is only meaningful when~$\mcA$ is non-empty,
and it may happen that~$\mcA$ is empty even if~$f_{hom}(g(x), \, x) = 0$
for any~$x\in\partial\Omega$.
\end{remark}

\subsection{Proof of Theorem~\ref{th:gamma}}

Before we give the proof of Theorem~\ref{th:gamma},
we state some auxiliary results. We first recall some
properties of the convolution, which will be useful 
in constructing the recovery sequence.

\begin{lemma} \label{lemma:convolution}
 For any~$P\in H^1(\R^3, \, \mcS_0)$ and~$\sigma>0$, there exists 
 a smooth map~$P_\sigma\colon\R^3\to\mcS_0$ that satisfies 
 the following properties:
 \begin{gather}
  \norm{P_\sigma}_{L^\infty(\R^3)}
   \lesssim \sigma^{-1/2}\norm{P}_{L^6(\R^3)}, \quad
  \norm{\nabla P_\sigma}_{L^\infty(\R^3)}
   \lesssim \sigma^{-3/2} \norm{\nabla P}_{L^2(\R^3)} \label{conv_Lipschitz} \\
  \norm{P - P_\sigma}_{L^2(\R^3)} \lesssim 
   \sigma\norm{\nabla P}_{L^2(\R^3)} \label{conv_conv} \\
  \norm{\nabla P - \nabla P_\sigma}_{L^2(\R^3)} 
   \to 0 \qquad \textrm{as } \sigma\to 0. \label{conv_H1}
 \end{gather}
 Moreover, if~$U\subseteq U^\prime$ are Borel subsets of~$\R^3$
 such that $\dist(U, \, \R^3\setminus U^\prime)>\sigma$, then
 \begin{equation} \label{conv_local}
  \norm{P_\sigma}_{L^2(U)} \leq \norm{P}_{L^2(U^\prime)} \! .
 \end{equation}
\end{lemma}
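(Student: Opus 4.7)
The plan is to define $P_\sigma$ as the standard mollification of $P$. Choose once and for all a non-negative, radial function $\rho\in C^\infty_c(\R^3)$ with $\supp\rho\subseteq B_1$ and $\int_{\R^3}\rho=1$, let $\rho_\sigma(x) := \sigma^{-3}\rho(x/\sigma)$ (so $\supp\rho_\sigma\subseteq B_\sigma$ and $\|\rho_\sigma\|_{L^1(\R^3)}=1$), and set $P_\sigma := P*\rho_\sigma$. All five estimates will follow from direct computation using the scalings $\|\rho_\sigma\|_{L^p(\R^3)} = \sigma^{3/p-3}\|\rho\|_{L^p(\R^3)}$ for $1\leq p\leq\infty$.

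For the two bounds in \eqref{conv_Lipschitz}, I would estimate pointwise. For the first, H\"older's inequality gives $|P_\sigma(x)|\leq \|P\|_{L^6(\R^3)}\|\rho_\sigma\|_{L^{6/5}(\R^3)}$, and the scaling identity with $p=6/5$ produces the factor $\sigma^{-1/2}$. For the second, since $P\in H^1(\R^3,\mcS_0)$ one may differentiate under the integral to obtain $\nabla P_\sigma = (\nabla P)*\rho_\sigma$, and then Cauchy--Schwarz gives $|\nabla P_\sigma(x)|\leq\|\nabla P\|_{L^2(\R^3)}\|\rho_\sigma\|_{L^2(\R^3)}$, with $\|\rho_\sigma\|_{L^2(\R^3)} = \sigma^{-3/2}\|\rho\|_{L^2(\R^3)}$.

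For \eqref{conv_conv}, I would write
\[
 P_\sigma(x) - P(x) = \int_{B_\sigma}\left(P(x-y)-P(x)\right)\rho_\sigma(y)\,\d y
\]
and use, for a.e.\ $x$, the $H^1$-version of the fundamental theorem of calculus $P(x-y)-P(x) = -\int_0^1\nabla P(x-ty)\cdot y\,\d t$. Since $|y|\leq\sigma$ on $\supp\rho_\sigma$, combining Minkowski's integral inequality in $x$ with the translation invariance of $\|\cdot\|_{L^2(\R^3)}$ yields $\|P_\sigma - P\|_{L^2(\R^3)}\lesssim\sigma\|\nabla P\|_{L^2(\R^3)}$. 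The convergence \eqref{conv_H1} is then just the classical fact that $f*\rho_\sigma\to f$ in $L^2$ for any $f\in L^2(\R^3,\mcS_0)$, applied to $f = \nabla P$ (using again $\nabla P_\sigma = (\nabla P)*\rho_\sigma$). Finally, for the locality property \eqref{conv_local}, the hypothesis $\dist(U,\R^3\setminus U')>\sigma$ implies that for every $x\in U$ only values $P(y)$ with $y\in U'$ contribute to $P_\sigma(x)$; hence $P_\sigma\mres U = ((P\mathbf{1}_{U'})*\rho_\sigma)\mres U$ and Young's convolution inequality together with $\|\rho_\sigma\|_{L^1(\R^3)} = 1$ gives $\|P_\sigma\|_{L^2(U)}\leq\|P\|_{L^2(U')}$.

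There is no genuine obstacle here: the lemma is a packaging of standard mollification estimates in the precise quantitative form needed later in the section to construct the recovery sequence. The only mildly non-obvious choice is to measure smoothness by $\sigma^{-1/2}$ for $L^\infty$ and $\sigma^{-3/2}$ for the Lipschitz seminorm, which is dictated by the subsequent application of the trace inequality of Lemma~\ref{lem:trace_eps} and the Sobolev embedding $H^1\hookrightarrow L^6$ in dimension three.
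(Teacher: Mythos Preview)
Your proposal is correct and follows the same construction as the paper: $P_\sigma$ is the standard mollification $P*\rho_\sigma$, and the estimates \eqref{conv_Lipschitz}, \eqref{conv_H1}, \eqref{conv_local} are handled identically (Young/H\"older with the scaling of $\|\rho_\sigma\|_{L^p}$, the classical $L^2$-convergence of mollifiers applied to $\nabla P$, and the locality of the convolution kernel combined with Young's inequality).

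The only noteworthy difference is in the proof of \eqref{conv_conv}. The paper argues on the Fourier side via Plancherel: writing $\widehat{\rho_\sigma}(\xi)=\psi(\sigma\xi)$ with $\psi(0)=1$ and $\psi$ Lipschitz, one gets $\|P-P_\sigma\|_{L^2}^2=\int|\hat P(\xi)|^2(1-\psi(\sigma\xi))^2\,\d\xi\leq \sigma^2\|\nabla\psi\|_{L^\infty}^2\int|\xi|^2|\hat P(\xi)|^2\,\d\xi$. Your real-space route via the fundamental theorem of calculus and Minkowski's integral inequality is equally valid and arguably more elementary, since it avoids the Fourier transform entirely; both yield the same bound with a $\sigma$-independent constant.
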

\begin{proof} 
 Let us take a non-negative, even function~$\varphi\in C^\infty_{\mathrm{c}}(\R^3)$,
 supported in the unit ball~$B_1$, such that $\norm{\varphi}_{L^1(\R^3)} = 1$.
 Let $\varphi_\sigma(x) := \sigma^{-3}\varphi(x/\sigma)$.
 By a change of variable, we see that
 \begin{equation} \label{conv1}
  \norm{\varphi_\sigma}_{L^p(\R^3)} = \sigma^{3/p - 3}\norm{\varphi}_{L^p(\R^3)}
  \qquad \textrm{for any } p\in [1, \, +\infty).
 \end{equation}
 Let~$P_\sigma$ be defined as the convolution~$P_\sigma := P*\varphi_\sigma$.
 Then, by Young's inequality, we have
 \[
  \norm{\nabla P_\sigma}_{L^\infty(\R^3)} = 
  \norm{(\nabla P) * \varphi_\sigma}_{L^\infty(\R^3)}
  \leq \norm{\nabla P}_{L^2(\R^3)} \norm{\varphi_\sigma}_{L^2(\R^3)}
  \stackrel{\eqref{conv1}}{\lesssim} 
  \sigma^{-3/2}\norm{\nabla P}_{L^2(\R^3)} \! .
 \]
 The other inequality in~\eqref{conv_Lipschitz} is obtained in a similar way.
 The condition~\eqref{conv_H1} is a well-known property of convolutions.
 
 Let us prove~\eqref{conv_conv}. Let~$\psi$ be the Fourier 
 transform\footnote{We adopt the convention 
 $\widehat{\varphi}(\xi) = \int_{\R^3} \varphi(x)
 \exp(-2\pi i x\cdot\xi) \, \d x$.} of~$\varphi$. Then, 
 $\psi$ is smooth and rapidly decaying (that is, it belongs 
 to the Schwartz space~$\mcS(\R^3)$) and, in particular,
 it is Lipschitz continuous. Moreover,
 $\psi(0) = \int_{\R^3}\varphi(x)\,\d x = 1$. 
 By the properties of the Fourier transform, we have 
 $\widehat{\varphi_\sigma}(\xi) = \psi(\sigma\xi)$.
 By applying Plancherel theorem, we obtain
 \[
  \begin{split}
   \norm{P - P_\sigma}_{L^2(\R^3)}^2 
   &= \int_{\R^3} |\hat{P}(\xi)|^2(1 - \psi(\sigma\xi))^2 \, \d\xi \\
   &= \int_{\R^3} |\hat{P}(\xi)|^2(\psi(0) - \psi(\sigma\xi))^2 \, \d\xi \\
   &\leq \sigma^2 \norm{\nabla\psi}^2_{L^\infty(\R^3)} 
    \int_{\R^3} \abs{\xi}^2 |\hat{P}(\xi)|^2 \, \d\xi \\
   &= \frac{\sigma^2}{4\pi^2} \norm{\nabla\psi}^2_{L^\infty(\R^3)}
   \norm{\nabla P}_{L^2(\R^3)}^2 \! .
  \end{split}
 \]
 It only remains to prove~\eqref{conv_local}. Let~$\chi$
 be the indicator function of~$U^\prime$ (i.e.
 $\chi=1$ on~$U^\prime$ and~$\chi=0$ elsewhere).
 Observe that $P_\sigma = (\chi P)*\varphi_\sigma$ on~$U$, because~$\varphi_\sigma$
 is supported on the ball~$B_\sigma$ of radius~$\sigma$ and,
 by assumption, $\dist(U, \, \R^3\setminus U^\prime)>\sigma$. Then,
 Young inequality implies
 \[
  \norm{P}_{L^2(U)} \leq \norm{\chi P}_{L^2(\R^3)}
  \norm{\varphi_\sigma}_{L^1(\R^3)} = \norm{P}_{L^2(U^\prime)} \! .
  \qedhere
 \]
\end{proof}

\begin{lemma} \label{lemma:conv-bdd}
 Let~$\Omega\subseteq\R^3$ a bounded, smooth domain,
 and let~$g\colon\Omega\to\mcS_0$ be a bounded, Lipschitz map.
 For any~$Q\in H^1_g(\Omega, \, \mcS_0)$ and~$\sigma\in (0, \, 1)$, there exists 
 a bounded, Lipschitz map~$Q_\sigma\colon\overline{\Omega}\to\mcS_0$
 that satisfies 
 the following properties:
 \begin{gather}
  Q_\sigma = g \qquad \textrm{on } \partial\Omega \\
  \norm{Q_\sigma}_{L^\infty(\Omega)}
   \lesssim \sigma^{-1/2}\left(\norm{Q}_{H^1(\Omega)}
   + \norm{g}_{L^\infty(\Omega)}\right) \label{conv_Linfty-bdd} \\
  \norm{\nabla Q_\sigma}_{L^\infty(\Omega)}
   \lesssim \sigma^{-3/2} \left(\norm{Q}_{H^1(\Omega)} 
   + \norm{g}_{W^{1,\infty}(\Omega)}\right) \label{conv_Lipschitz-bdd} \\
  \norm{Q - Q_\sigma}_{L^2(\Omega)} \lesssim 
   \sigma\norm{Q}_{H^1(\Omega)} \label{conv_conv-bdd} \\
  \norm{\nabla Q - \nabla Q_\sigma}_{L^2(\Omega)} 
   \to 0 \qquad \textrm{as } \sigma\to 0. \label{conv_H1-bdd}
 \end{gather}
\end{lemma}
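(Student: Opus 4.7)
The plan is to reduce the construction to Lemma~\ref{lemma:convolution} by the standard recipe \emph{extend--mollify--cut off}, arranged so that the prescribed trace $g$ is preserved exactly. First, I would extend $g$ to a bounded, globally Lipschitz map $\tilde g\colon\R^3\to\mcS_0$ with $\|\tilde g\|_{L^\infty}\lesssim \|g\|_{L^\infty(\partial\Omega)}$ and $\|\nabla\tilde g\|_{L^\infty}\lesssim\|g\|_{W^{1,\infty}(\partial\Omega)}$; such an extension exists by a componentwise McShane--Whitney construction. Then I would set $P:=Q-\tilde g\in H^1_0(\Omega,\mcS_0)$, extend it by zero to all of $\R^3$, and apply Lemma~\ref{lemma:convolution} to obtain $P_\sigma:=P*\varphi_\sigma$.

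Next I would introduce a smooth cutoff $\eta_\sigma\in C^\infty(\R^3)$ depending only on $d(x):=\dist(x,\partial\Omega)$, with $\eta_\sigma=0$ on $\{d\leq\sigma\}$, $\eta_\sigma=1$ on $\{d\geq 2\sigma\}$, and $\|\nabla\eta_\sigma\|_{L^\infty}\lesssim\sigma^{-1}$, and define
\[
 Q_\sigma := \eta_\sigma\,P_\sigma + \tilde g.
\]
By construction $Q_\sigma=g$ on $\partial\Omega$ and $Q_\sigma$ is bounded and Lipschitz. The pointwise bounds~\eqref{conv_Linfty-bdd}--\eqref{conv_Lipschitz-bdd} follow from the Leibniz rule together with~\eqref{conv_Lipschitz}, the Sobolev embedding $\|P\|_{L^6(\Omega)}\lesssim\|P\|_{H^1(\Omega)}$, and the estimates for $\tilde g$.

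For~\eqref{conv_conv-bdd}, I would decompose $Q-Q_\sigma = (P-P_\sigma) + (1-\eta_\sigma)P_\sigma$, controlling the first summand by~\eqref{conv_conv} and the second by~\eqref{conv_local} together with a boundary-strip estimate: for any $t>0$ and any $v\in H^1_0(\Omega)$,
\[
 \|v\|_{L^2(\{d<t\})} \lesssim t\,\|\nabla v\|_{L^2(\{d<t\})},
\]
which follows by integrating along the inward normal in a tubular neighbourhood of $\partial\Omega$ and using that $v$ vanishes on $\partial\Omega$.

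The delicate point, and the one I expect to be the main obstacle, is the $H^1$-convergence~\eqref{conv_H1-bdd}. Writing
\[
 \nabla Q - \nabla Q_\sigma = (\nabla P-\nabla P_\sigma) + (1-\eta_\sigma)\nabla P_\sigma - \nabla\eta_\sigma\otimes P_\sigma,
\]
the first summand vanishes in $L^2$ by~\eqref{conv_H1}, and the second one by applying~\eqref{conv_local} to $\nabla P$ on the strip $\{d<2\sigma\}$ combined with dominated convergence, since $|\{d<3\sigma\}|\to 0$. The dangerous summand is the third one, which carries the blow-up $\|\nabla\eta_\sigma\|_{L^\infty}\lesssim\sigma^{-1}$. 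Here the boundary-strip Poincar\'e inequality above is exactly what saves the argument: it yields $\|P\|_{L^2(\{d<3\sigma\})}\lesssim\sigma\,\|\nabla P\|_{L^2(\{d<3\sigma\})}$, so $\|\nabla\eta_\sigma\otimes P_\sigma\|_{L^2}\lesssim\|\nabla P\|_{L^2(\{d<3\sigma\})}\to 0$. That this term vanishes at all uses the specific fact that $P$ belongs to $H^1_0$ rather than just $H^1$; the extension~$\tilde g$ of the boundary datum was introduced precisely to enforce this.
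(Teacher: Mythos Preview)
Your proposal is correct and follows essentially the same route as the paper: extend $g$ to $\R^3$, set $P:=Q-g\in H^1_0$, mollify via Lemma~\ref{lemma:convolution}, multiply by a cutoff that vanishes on $\partial\Omega$, and add $g$ back; the paper uses the Lipschitz cutoff $\min(1,\sigma^{-1}\dist(x,\partial\Omega))$ in place of your smooth $\eta_\sigma$, but the ensuing estimates---including the boundary-strip Poincar\'e inequality for $P\in H^1_0$ that you single out as the key point---are identical.
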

\begin{proof} 
 Since~$\Omega$ is bounded and smooth, we can extend~$g$
 to a bounded, Lipschitz map~$\R^3 \to\mcS_0$, still denoted~$g$
 for simplicity, in such a way that
 $\norm{g}_{L^\infty(\R^3)} \lesssim \norm{g}_{L^\infty(\Omega)}$,
 $\norm{\nabla g}_{L^\infty(\R^3)} \lesssim \norm{\nabla g}_{L^\infty(\Omega)}$.
 Let~$P := Q - g$. Then, $P\in H^1_0(\Omega, \, \mcS_0)$,
 and we extend~$P$ to a new map~$P\in H^1(\R^3, \, \mcS_0)$
 by setting~$P := 0$ on~$\R^3\setminus\Omega$. By applying 
 Lemma~\ref{lemma:convolution} to~$P$,
 we construct a family of smooth maps~$(P_\sigma)_{\sigma>0}$
 that satisfies~\eqref{conv_Lipschitz}--\eqref{conv_local}.
 We define
 \[
  \tilde{P}_\sigma(x) := \min\left(1, \, 
  \sigma^{-1}\dist(x, \, \partial\Omega) \right) P_\sigma(x)
  \qquad \textrm{for } x\in\Omega.
 \]
 The map~$\tilde{P}_\sigma\colon\overline{\Omega}\to\mcS_0$ 
 is bounded, Lipschitz, and $\tilde{P}_\sigma = 0$ on~$\partial\Omega$.
 We claim that~$\tilde{P}_\sigma$ satisfies 
 \eqref{conv_Linfty-bdd}--\eqref{conv_H1-bdd} with $g\equiv 0$; the lemma will follow
 by taking~$Q_\sigma := \tilde{P}_\sigma + g$. 
 First, we note that \eqref{conv_Linfty-bdd} is a consequence of the extension of  $P$ to the whole $\R^3$ and \eqref{conv_Lipschitz} together with the Gagliardo-Nirenberg-Soboleve inequality. Then we check~\eqref{conv_Lipschitz-bdd}. 
 Clearly~$|\tilde{P}_\sigma|\leq |P_\sigma|$. Using the chain rule,
 and keeping in mind that the function
 $\dist(\cdot, \, \partial\Omega)$ is~$1$-Lipschitz, we see that
 \begin{equation} \label{conv2}
  |\nabla\tilde{P}_\sigma| \leq |\nabla P_\sigma| + \sigma^{-1}|P_\sigma|
  \qquad \textrm{a.e. on } \Omega
 \end{equation}
 and~\eqref{conv_Lipschitz-bdd} follows, with the 
 help of~\eqref{conv_Lipschitz}. 
 
 We pass to the proof 
 of~\eqref{conv_conv-bdd}. Let~$\Gamma_\sigma := \{x\in\R^3\colon 
 \dist(x, \, \partial\Omega)<\sigma\}$. Since~$\partial\Omega$ 
 is a compact, smooth manifold, for sufficiently small~$\sigma$
 the set~$\Gamma_\sigma$ is diffeomorphic to the 
 product~$\partial\Omega\times(-\sigma, \, \sigma)$.
 We identify~$\Gamma_\sigma\simeq\partial\Omega\times(-\sigma, \, \sigma)$
 and denote the variable in~$\Gamma_\sigma$ 
 as~$x = (y, \, t)\in\partial\Omega\times(-\sigma, \, \sigma)$.
 We apply Poincar\'e inequality to the map~$P$
 on each slice~$\{y\}\times(-\sigma, \, \sigma)$:
 \[
  \int_{-\sigma}^\sigma \abs{P(y, \, t)}^2\d t
  = \int_0^\sigma \abs{P(y, \, t) - P(y, \, 0)}^2 \d t
 \lesssim \sigma^2 \int_0^\sigma \abs{\partial_t P(y, \, t)}^2 \d t.
 \]
 By integrating with respect to~$y\in\partial\Omega$, we obtain
 \begin{equation} \label{conv3}
  \norm{P}_{L^2(\Gamma_\sigma)} \lesssim 
  \sigma\norm{\nabla P}_{L^2(\Gamma_\sigma)}
 \end{equation}
 and hence,
 \[
  \begin{split}
   \|\tilde{P}_\sigma - P_\sigma\|_{L^2(\Omega)}
  \lesssim \norm{P_\sigma}_{L^2(\Gamma_\sigma)}
 \lesssim \norm{P}_{L^2(\Gamma_\sigma)} + \norm{P - P_\sigma}_{L^2(\Omega)}
   \stackrel{\eqref{conv_conv}, \eqref{conv3}}{\lesssim} 
   \sigma \norm{\nabla P}_{L^2(\Omega)} \! .
  \end{split}
 \]
 Finally, let us prove~\eqref{conv_H1-bdd}.
 Combining~\eqref{conv_local} and~\eqref{conv3}, we deduce
 \begin{equation} \label{conv4}
  \norm{P_\sigma}_{L^2(\Gamma_{\sigma})} 
  \leq \norm{P}_{L^2(\Gamma_{2\sigma})} 
  \lesssim \sigma\norm{\nabla P}_{L^2(\Gamma_{2\sigma})} \! .
 \end{equation}
 Therefore, we have
 \[
  \begin{split}
   \|\nabla\tilde{P}_\sigma - \nabla P_\sigma\|_{L^2(\Omega)}
   &\leq \|\nabla\tilde{P}_\sigma\|_{L^2(\Gamma_\sigma)} 
     + \norm{\nabla P_\sigma}_{L^2(\Gamma_\sigma)} \\
   &\stackrel{\eqref{conv2}}{\lesssim} \norm{\nabla P_\sigma}_{L^2(\Gamma_\sigma)}
     + \sigma^{-1} \norm{P_\sigma}_{L^2(\Gamma_{\sigma})} \\
   &\stackrel{\eqref{conv4}}{\lesssim} \norm{\nabla P}_{L^2(\Gamma_{2\sigma})}
     + \norm{\nabla P_\sigma - \nabla P}_{L^2(\Omega)}
  \end{split}
 \]
 and both terms in the right-hand side converge
 to zero as~$\sigma\to 0$, due to~\eqref{conv_H1}.
\end{proof}

\begin{lemma} \label{lemma:recovery}
 For any~$Q\in H^1_g(\Omega, \, \mcS_0)$, there exists 
 a sequence~$(Q_\eps)_{\eps>0}$ in~$H^1_g(\Omega, \, \mcS_0)$
 that converges $H^1(\Omega)$-strongly to~$Q$ and satisfies
 \[
  \abs{J_\eps[Q_\eps] - J_0[Q]} \lesssim \eps^{1/4}
  \left(\norm{Q}^4_{H^1(\Omega)} + 1\right)
 \]
 (the functionals~$J_\eps$, $J_0$ are defined 
 in~\eqref{Jeps}, \eqref{J0} respectively).
 The constant implied in front of the right-hand side 
 depends on the~$L^\infty(\partial\Omega)$-norms of~$g$ and~$\nabla g$,
 as well as~$\Omega$, $f_s^j$, $\mcP^j$, $R_*^j$ with $j\in\{1,\dots,J\}$.
\end{lemma}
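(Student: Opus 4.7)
The plan is to take $Q_\eps := Q_{\sigma}$ where $Q_\sigma$ is produced by applying Lemma~\ref{lemma:conv-bdd} to $Q$, with $\sigma = \sigma(\eps)\to 0$ to be chosen at the end. I would then write the triangle decomposition
\[
 |J_\eps[Q_\sigma] - J_0[Q]| \leq T_1 + T_2 + T_3,
\]
with $T_1 := |J_\eps[Q_\sigma] - \tilde{J}_\eps[Q_\sigma]|$, $T_2 := |\tilde{J}_\eps[Q_\sigma] - J_0[Q_\sigma]|$, and $T_3 := |J_0[Q_\sigma] - J_0[Q]|$, and estimate each piece separately, optimizing $\sigma$ at the very end.

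For $T_1$, I would invoke Lemma~\ref{lemma:Jeps} (which requires $Q_\sigma$ bounded and Lipschitz, as guaranteed by Lemma~\ref{lemma:conv-bdd}) and insert the bounds \eqref{conv_Linfty-bdd}--\eqref{conv_Lipschitz-bdd}; using $\alpha>1$ this yields $T_1 \lesssim \eps^{\alpha}\sigma^{-3}(\|Q\|_{H^1}^4+1) \lesssim \eps\sigma^{-3}(\|Q\|_{H^1}^4+1)$. For $T_2$, I would use the representation \eqref{Jtilde_eps_repr} to rewrite $\tilde{J}_\eps[Q_\sigma]-J_0[Q_\sigma] = \sum_j \int_\Omega \varphi_j\,\d(\mu_\eps^j - \xi^j\,\d x)$ with $\varphi_j(x):=f_{hom}^j(Q_\sigma(x),x)$, then bound the $W^{1,\infty}(\Omega)$-norm of $\varphi_j$ by combining the pointwise joint Lipschitz estimate \eqref{fhom1} (from the proof of Lemma~\ref{lemma:fhom}) with the bounds \eqref{conv_Linfty-bdd}--\eqref{conv_Lipschitz-bdd} on $Q_\sigma$; this gives $\|\varphi_j\|_{W^{1,\infty}(\Omega)}\lesssim \sigma^{-3}(\|Q\|_{H^1}^4+1)$, and pairing with \eqref{hpgamma:flat} produces $T_2 \lesssim \eps\sigma^{-3}(\|Q\|_{H^1}^4+1)$. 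For $T_3$, I would apply \eqref{fhom-Lipschitz} pointwise, then H\"older with $L^2\times L^2$, the Sobolev embedding $H^1(\Omega)\hookrightarrow L^6(\Omega)$ (to bound $\||Q_\sigma|^3+|Q|^3+1\|_{L^2}$), and \eqref{conv_conv-bdd}, arriving at $T_3 \lesssim \sigma(\|Q\|_{H^1}^4+1)$.

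Adding the three terms gives $|J_\eps[Q_\sigma]-J_0[Q]| \lesssim (\eps\sigma^{-3}+\sigma)(\|Q\|_{H^1}^4+1)$; the balance $\eps\sigma^{-3}=\sigma$ selects $\sigma=\eps^{1/4}$, yielding precisely the claimed $\eps^{1/4}$-rate. The $H^1$-convergence $Q_\eps=Q_{\eps^{1/4}}\to Q$ follows immediately from \eqref{conv_conv-bdd} and \eqref{conv_H1-bdd}, since $\sigma\to 0$.

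The main obstacle is the delicate $W^{1,\infty}(\Omega)$-bound on $x\mapsto f_{hom}^j(Q_\sigma(x),x)$ needed for $T_2$: the estimate must simultaneously absorb the $|Q|^3$-growth of $\partial_Q f_{hom}^j$ against the blow-up of $\|Q_\sigma\|_{L^\infty}$ in $\sigma^{-1/2}$ \emph{and} the blow-up of $\|\nabla Q_\sigma\|_{L^\infty}$ in $\sigma^{-3/2}$ (the latter entering via the chain rule). It is precisely the resulting $\sigma^{-3}$ (multiplied by the linear-in-$\eps$ flat-norm rate) that, balanced against the $O(\sigma)$ loss in $T_3$, fixes the exponent $1/4$; the roles of $\alpha>1$ (ensuring $T_1$ is dominated by $T_2$) and of the quartic $Q$-growth in $f_s^j$ are visible at each step.
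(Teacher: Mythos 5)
Your proof is correct and follows essentially the same route as the paper: regularize via Lemma~\ref{lemma:conv-bdd}, split into the three differences against $\tilde{J}_\eps$ and $J_0$, bound them using Lemma~\ref{lemma:Jeps}, the flat-norm assumption~\eqref{hpgamma:flat} and Lemma~\ref{lemma:fhom}, and optimize the regularization scale. The only cosmetic difference is that you carry an unknown scale $\sigma$ and balance $\eps\sigma^{-3}$ against $\sigma$ at the end, while the paper sets $\sigma=\eps^\beta$ from the start and minimizes over $\beta$; both fix the exponent at $1/4$.
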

\begin{proof}
 Let us fix a small~$\eps>0$. Let~$\beta$ 
 be a positive parameter, to be chosen later,
 and let~$Q_\eps := Q_{\eps^\beta}\in H^1_g(\Omega, \, \mcS_0)$
 be the Lipschitz map given by Lemma~\ref{lemma:conv-bdd}. We have
 \begin{equation} \label{recovery0}
  \begin{split}
   \abs{J_\eps[Q_\eps] - J_0[Q]} \leq 
    |J_\eps[Q_\eps] - \tilde{J}_\eps[Q_\eps]|
    + |\tilde{J}_\eps[Q_\eps] - J_0[Q_\eps]|
    + \abs{J_0[Q_\eps] - J_0[Q]}
  \end{split}
 \end{equation}
 where~$\tilde{J}_\eps$ is defined by~\eqref{Jtilde_eps}.
 We will estimate separately all the terms in the right-hand side.
 
 First, let us estimate the difference $J_\eps[Q_\eps] - \tilde{J}_\eps[Q_\eps]$.
 This can be achieved with the help of Lemma~\ref{lemma:Jeps}:
 \begin{equation} \label{recovery1}
  \begin{split}
   |J_\eps[Q_\eps] - \tilde{J}_\eps[Q_\eps]|
   &\stackrel{\eqref{Jtilde_eps_conv}}{\lesssim} 
    \eps^\alpha \left(\norm{Q_\eps}_{L^\infty(\Omega)}^3 + 1\right) 
    \norm{\nabla Q_\eps}_{L^\infty(\Omega)} \\
   &\stackrel{\eqref{conv_Linfty-bdd}, \eqref{conv_Lipschitz-bdd}}{\lesssim} 
    \eps^{\alpha - 3\beta} \left(\norm{Q}_{H^1(\Omega)}^4 + 1\right)
  \end{split}
 \end{equation}
 (here and througout the rest of the proof, the constant implied
 in front of the right-hand side may depend on the $L^\infty$-norms of~$g$ and~$\nabla g$).
 
 As for the second term, $\tilde{J}_\eps[Q_\eps] - J_0[Q_\eps]$,
 we write~$\tilde{J}_\eps$ in the form~\eqref{Jtilde_eps_repr}
 and we re-write~$J_0$ using~\eqref{f_hom}, \eqref{J0}:
 \begin{equation} \label{recovery1.5}
  \begin{split}
   |\tilde{J}_\eps[Q_\eps] - J_0[Q_\eps]|
   &\leq \sum_{j=1}^J \abs{\int_{\Omega} f_{hom}^j(Q_\eps, \, x) \, \d\mu_\eps^j
   - \int_{\Omega} f_{hom}^j(Q_\eps, \, x) \, \xi^j \, \d x} \\
   &\stackrel{\eqref{flat}}{\leq} \mathbb{F}_\eps
   \sum_{j=1}^J \left(\|\nabla (f_{hom}^j(Q_\eps, \, \cdot))\|_{L^\infty(\Omega)} 
   + \|f_{hom}^j(Q_\eps, \, \cdot)\|_{L^\infty(\Omega)}\right) \! .
  \end{split}
 \end{equation}
 To estimate the terms at the right-hand side, we apply 
 Lemma~\ref{lemma:fhom} and Lemma~\ref{lemma:conv-bdd}:
 \[
  \begin{split}
   \|\nabla (f_{hom}^j(Q_\eps, \, \cdot))\|_{L^\infty(\Omega)} 
   &\stackrel{\eqref{fhomj-Lipschitz}}{\lesssim} 
    \left(\norm{Q_\eps}_{L^\infty(\Omega)}^3 + 1 \right)
    \norm{\nabla Q_\eps}_{L^\infty(\Omega)} \\
   &\stackrel{\eqref{conv_Lipschitz-bdd}}{\lesssim} 
    \eps^{- 3\beta} \left(\norm{Q}_{H^1(\Omega)}^4 + 1\right) \! ,
  \end{split}
 \]
 and
 \[
  \begin{split}
   \|f_{hom}^j(Q_\eps, \, \cdot)\|_{L^\infty(\Omega)} 
   &\stackrel{\eqref{fhomj-Lipschitz}}{\lesssim} 
    \norm{Q_\eps}_{L^\infty(\Omega)}^4 + 1 
   \stackrel{\eqref{conv_Lipschitz-bdd}}{\lesssim} 
    \eps^{- 2\beta} \left(\norm{Q}_{H^1(\Omega)}^4 + 1\right) \! .
  \end{split}
 \]
 Injecting these inequalities into~\eqref{recovery1.5}, and using 
 that~$\mathbb{F}_\eps\lesssim\eps$ by Assumption~\eqref{flat},
 we obtain
 \begin{equation} \label{recovery2}
  \begin{split}
   |\tilde{J}_\eps[Q_\eps] - J_0[Q_\eps]|
   &\lesssim \eps^{1- 3\beta} \left(\norm{Q}_{H^1(\Omega)}^4 + 1\right) \!.
  \end{split}
 \end{equation}
 Finally, the term~$J_0[Q_\eps] - J_0[Q]$. We apply 
 Lemma~\ref{lemma:fhom} and the H\"older inequality:
 \begin{equation*} 
  \begin{split}
   \abs{J_0[Q_\eps] - J_0[Q]} 
   &\leq \int_{\Omega} \abs{f_{hom}(Q_\eps, \, \cdot) 
    - f_{hom}(Q, \, \cdot)}  \\  
   &\stackrel{\eqref{fhom-Lipschitz}}{\leq} 
    \int_{\Omega} \left(\abs{Q}^3 + \abs{Q_\eps}^3 + 1\right) \abs{Q - Q_\eps} \\
   &\lesssim \left(\norm{Q}_{L^6(\Omega)}^3 
    + \norm{Q_\eps}_{L^6(\Omega)}^3 + 1 \right)
    \norm{Q - Q_\eps}_{L^2(\Omega)}
  \end{split}
 \end{equation*}
 The sequence~$Q_\eps$ is bounded in~$L^6(\Omega)$,
 thanks to Sobolev embedding and to Lemma~\ref{lemma:conv-bdd}.
 Therefore, 
 \begin{equation} \label{recovery3}
  \begin{split}
   \abs{J_0[Q_\eps] - J_0[Q]} 
   &\stackrel{\eqref{conv_conv-bdd}}{\lesssim}
    \eps^\beta \norm{Q}_{H^1(\Omega)}^4 + \eps^\beta \norm{Q}_{H^1(\Omega)} \! .
  \end{split}
 \end{equation}
 Combining \eqref{recovery0}, \eqref{recovery1}, 
 \eqref{recovery2} and~\eqref{recovery3}, we deduce
 \[
  \abs{J_\eps[Q_\eps] - J_0[Q]} \lesssim 
  \eps^{\min(\alpha - 3\beta, \, 1 - 3\beta, \, \beta)}
  \left(\norm{Q}^4_{H^1(\Omega)} + 1\right) \! .
 \]
 Keeping into account that~$\alpha > 1$, we see that 
 the optimal choice of~$\beta$ is~$\beta = 1/4$, and the lemma follows.
\end{proof}

\begin{lemma} \label{lemma:lim_volume}
 Let $Q_\eps\in H^1(\Omega_\eps, \, \mcS_0)$
 be a family of maps, such that $E_\eps Q_\eps\to Q$
 strongly in~$H^1(\Omega)$, as~$\eps\to 0$. Then, 
 \begin{equation*}
  \limsup_{\eps\to 0} \int_{\Omega_\eps}  
   \left(f_e(\nabla Q_\eps) + f_b(Q_\eps)\right)\d x
  \leq \int_{\Omega} \left(f_e(\nabla Q) + f_b(Q)\right)\d x. 
 \end{equation*}
\end{lemma}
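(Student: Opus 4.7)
The plan is to reduce the integral over~$\Omega_\eps$ to an integral over the full domain~$\Omega$ by exploiting the non-negativity of the integrands and the harmonic extension, and then use the strong $H^1$-convergence of $E_\eps Q_\eps$ to pass to the limit. Since $E_\eps Q_\eps = Q_\eps$ on~$\Omega_\eps$, and both~$f_e$ and~$f_b$ are non-negative by~\eqref{hp:fe}--\eqref{hp:fb}, I would start from the trivial inequality
\[
 \int_{\Omega_\eps} \left(f_e(\nabla Q_\eps) + f_b(Q_\eps)\right)\d x
 \leq \int_{\Omega} \left(f_e(\nabla (E_\eps Q_\eps)) + f_b(E_\eps Q_\eps)\right)\d x.
\]
It then suffices to show that the right-hand side converges to $\int_\Omega (f_e(\nabla Q) + f_b(Q))\,\d x$.

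For the elastic term, assumption~\eqref{hp:fe} provides the pointwise Lipschitz bound $|f_e(D_1) - f_e(D_2)| \lesssim (|D_1| + |D_2| + 1)|D_1 - D_2|$, obtained by integrating $\nabla f_e$ along the segment joining $D_1$ and $D_2$ and using $|(\nabla f_e)(D)| \lesssim |D| + 1$. Cauchy--Schwarz then gives
\[
 \int_\Omega \abs{f_e(\nabla (E_\eps Q_\eps)) - f_e(\nabla Q)}\,\d x
 \lesssim \left(\norm{\nabla (E_\eps Q_\eps)}_{L^2(\Omega)}
 + \norm{\nabla Q}_{L^2(\Omega)} + 1\right)
 \norm{\nabla (E_\eps Q_\eps) - \nabla Q}_{L^2(\Omega)},
\]
which tends to zero by the assumed strong $H^1$-convergence.

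For the bulk term, I would use Sobolev embedding $H^1(\Omega)\hookrightarrow L^6(\Omega)$: the strong $H^1$-convergence of $E_\eps Q_\eps$ to $Q$ yields strong $L^6$-convergence, hence $\abs{E_\eps Q_\eps}^6 \to \abs{Q}^6$ in $L^1(\Omega)$ and (after extracting a subsequence) pointwise a.e.~convergence of $E_\eps Q_\eps$ to $Q$. The continuity of $f_b$ gives a.e.~convergence $f_b(E_\eps Q_\eps) \to f_b(Q)$, while the growth bound $0 \leq f_b(Q) \leq \lambda_b(|Q|^6 + 1)$ from~\eqref{hp:fb} together with $L^1$-convergence of $|E_\eps Q_\eps|^6$ ensures that the family $\{f_b(E_\eps Q_\eps)\}_\eps$ is equi-integrable. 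Vitali's convergence theorem then delivers $\int_\Omega f_b(E_\eps Q_\eps)\,\d x \to \int_\Omega f_b(Q)\,\d x$, and since the limit is independent of the subsequence the convergence holds for the full family.

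The only subtle point is the bulk integral, since the growth of $f_b$ matches the critical Sobolev exponent~$6$ exactly: one cannot produce a fixed $L^1$-majorant and must instead rely on equi-integrability, which is precisely what the strong (not merely weak) $L^6$-convergence of $E_\eps Q_\eps$ provides. Combining this bulk convergence with the elastic one and the initial inequality immediately yields the $\limsup$ bound.
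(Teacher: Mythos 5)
Your proof is correct and takes a genuinely different route from the paper's. The paper works directly with the indicator function $\chi_\eps$ of~$\Omega_\eps$: it uses the standard partial converse of dominated convergence (along a subsequence, strong $L^p$-convergence gives a fixed $L^p$-majorant) to find dominating functions $h_e\in L^2$, $h_b\in L^6$ for $\nabla(E_\eps Q_\eps)$ and $E_\eps Q_\eps$, observes $\chi_\eps\to 1$ a.e. along a further subsequence, and applies Lebesgue dominated convergence to the product $(f_e(\nabla Q_\eps)+f_b(Q_\eps))\chi_\eps$. You instead drop $\chi_\eps$ at the outset using nonnegativity and then prove convergence of the full-domain integrals: for $f_e$ you exploit the quadratic-growth Lipschitz estimate coming from $\abs{\nabla f_e(D)}\lesssim\abs{D}+1$ together with Cauchy--Schwarz, which is entirely subsequence-free; for $f_b$ you invoke Vitali via the equi-integrability of $\{\abs{E_\eps Q_\eps}^6\}$ furnished by strong $L^6$-convergence. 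Your treatment of the elastic term is arguably slicker (no dominating functions, no subsequences), and your Vitali argument makes visible exactly why the critical exponent $6$ is harmless; the paper's proof is marginally more uniform in that a single dominated-convergence step handles both terms at once. Both are sound.
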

\begin{proof}
 By Sobolev embedding, we have $E_\eps Q_\eps\to Q$ strongly in~$L^6(\Omega)$.
 Then, up to extraction of a non-relabelled subsequence,
 we find functions~$h_e\in L^2(\Omega)$, $h_b\in L^6(\Omega)$ such that
 \begin{equation} \label{lim_volume1}
  \abs{\nabla(E_\eps Q_\eps)} \leq h_e, 
  \quad \abs{E_\eps Q_\eps} \leq h_b \qquad \textrm{a.e. on } \Omega, 
  \textrm{ for any } \eps.
 \end{equation}
 Let~$\chi_\eps$ be the  the indicator function 
 of~$\Omega_\eps$ (i.e., $\chi_\eps := 1$ on~$\Omega_\eps$ and
 $\chi_\eps := 0$ elsewhere). Thanks to~\eqref{hp:fe}, \eqref{hp:fb}
 and to~\eqref{lim_volume1}, we have
 \[
  \left(f_e(\nabla Q_\eps) + f_b(Q_\eps)\right) \chi_\eps
  \lesssim h_e^2 + h_b^6 + 1 \in L^1(\Omega)
  \qquad \textrm{a.e. on } \Omega, \textrm{ for any } \eps.
 \]
 Moreover, since $|\mcP_\eps|\lesssim \eps^{3\alpha - 3}\to 0$, 
 $\chi_{\eps}$ converges to~$1$ strongly in~$L^1(\Omega)$
 and we may extract a further subsequence so to have
 $\chi_\eps\to 1$ a.e. Then, the lemma
 follows from Lebesgue's dominated converge theorem.
\end{proof}

\begin{proof}[Proof of Theorem~\ref{th:gamma}]
 Let~$Q_\eps\in H^1_g(\Omega_\eps, \, \mcS_0)$, for~$\eps>0$,
 be a family of maps such that $\sup_\eps\mcF_{\eps,\gamma}(Q)<+\infty$.
 We first extract a (non-relabelled) subsequence~$\eps\to 0$, so that
 $\limsup_{\eps\to 0}\mcF_{\eps,\gamma}(Q_\eps)$ is achieved as a limit;
 this allows us to pass freely to subsequences, in what follows.
 Thanks to~\eqref{hp:fe}, \eqref{hp:fb}, \eqref{hpgamma:fs},
 we have $\sup_\eps\|Q_\eps\|_{L^2(\Omega_\eps)}<+\infty$.
 By Lemma~\ref{lem:extension}, there is a (non-relabelled) subsequence
 and~$Q_0\in H^1_g(\Omega, \, \mcS_0)$ such that $E_\eps Q_\eps\rightharpoonup Q_0$
 weakly in~$H^1(\Omega)$. By Proposition~\ref{prop:liminf},
 there holds
 \[
  \widetilde{\mcF}(Q) \leq \liminf_{\eps\to 0} 
  \int_{\Omega_\eps}\left(f_e(\nabla Q_\eps) + f_b(Q_\eps)\right)
  \leq \liminf_{\eps\to 0} \mcF_{\eps,\gamma}(Q_\eps)
 \]
 and
 \[
  J_0[Q_0] = \lim_{\eps\to 0} J_\eps[Q_\eps] \leq 
  \limsup_{\eps\to 0} \eps^\gamma \mcF_{\eps,\gamma}(Q_\eps) = 0,
 \]
 so~$Q_0$ belongs to the class~$\mcA$ defined by~\eqref{A}.
 Thus, Statement~(i) is proved.
 
 We now prove Statement~(ii). Let~$Q_0\in H^1_g(\Omega, \, \Omega)$
 be fixed. We can suppose without loss of generality that~$Q\in\mcA$,
 otherwise the statement is trivial. Due to Lemma~\ref{lemma:recovery},
 there is a sequence~$\tilde{Q}_\eps\in H^1_g(\Omega, \, \mcS_0)$
 such that $\tilde{Q}_\eps\to Q_0$ strongly in~$H^1(\Omega)$ and
 \begin{equation} \label{gamma1}
  \abs{J_\eps[\tilde{Q}_\eps]} = \eps^{3-2\alpha} \abs{\sum_{j=1}^J
  \int_{\partial\mcP^j} f_s^j(\tilde{Q}_\eps, \, \nu) \,\d\sigma}
  \lesssim \eps^{1/4} \left(\norm{Q}_{L^4(\Omega)}^4 + 1\right) \! .
 \end{equation}
 Let~$Q_\eps := \tilde{Q}_{\eps|\Omega_\eps}$. By Lemma~\ref{lem:extension},
 $E_\eps Q_\eps\to Q_0$ strongly in~$H^1(\Omega)$. Using Lemma~\ref{lemma:lim_volume}
 and~\eqref{gamma1}, and recalling that~$\gamma < 1/4$, we conclude that
 \[
  \limsup_{\eps\to 0} \mcF_{\eps,\gamma}(Q_\eps) =
  \limsup_{\eps\to 0} \int_{\Omega_\eps}\left(f_e(\nabla Q_\eps) + f_b(Q_\eps)\right)
  +\underbrace{\limsup_{\eps\to 0} \eps^{-\gamma} J_\eps[Q_\eps]}_{=0, \textrm{ by \eqref{gamma1}}}
  \leq \widetilde{\mcF}(Q_0),
 \]
 so the proof is complete. 
\end{proof}

\section*{Acknowledgements}

The work of A.Z. is supported by the Basque Government through the BERC 2018-2021
program, by Spanish Ministry of Economy and Competitiveness MINECO through BCAM
Severo Ochoa excellence accreditation SEV-2017-0718 and through project MTM2017-82184-R
funded by (AEI/FEDER, UE) and acronym ``DESFLU''. 



\end{document}